\let\pa\partial
\let\na\nabla
\let\eps\varepsilon
\newcommand{\R}{{\mathbb R}}
\newcommand{\Z}{{\mathbb Z}}
\newcommand{\diver}{\operatorname{div}}
\newcommand{\T}{{\mathbb T}}
\newtheorem{theorem}{Theorem}
\newtheorem{lemma}[theorem]{Lemma}
\newtheorem{proposition}[theorem]{Proposition}
\newtheorem{remark}[theorem]{Remark}
\begin{document}

\title[Entropy-dissipating finite-difference schemes]{Entropy-dissipating 
finite-difference schemes for nonlinear fourth-order parabolic equations}

\author[M. Braukhoff]{Marcel Braukhoff}
\address{Institute for Analysis and Scientific Computing, Vienna University of
	Technology, Wiedner Hauptstra\ss e 8--10, 1040 Wien, Austria}
\email{marcel.braukhoff@tuwien.ac.at}

\author[A. J\"ungel]{Ansgar J\"ungel}
\address{Institute for Analysis and Scientific Computing, Vienna University of
	Technology, Wiedner Hauptstra\ss e 8--10, 1040 Wien, Austria}
\email{juengel@tuwien.ac.at}

\date{\today}

\thanks{The authors acknowledge partial support from
the Austrian Science Fund (FWF), grants F65, P30000, P33010, and W1245.}

\begin{abstract}
Structure-preserving
finite-difference schemes for general nonlinear fourth-order parabolic equations
on the one-dimensional torus are derived. 
Examples include the thin-film and the Derrida--Lebowitz--Speer--Spohn equations.
The schemes conserve the mass and dissipate the entropy. The scheme associated
to the logarithmic entropy also preserves the positivity. 
The idea of the derivation is to reformulate the equations in such a way that 
the chain rule is avoided. 
A central finite-difference discretization is then applied to the
reformulation. In this way, the same dissipation rates as in the continuous case
are recovered. The strategy can be extended to a multi-dimensional thin-film
equation. Numerical examples in one and two space dimensions illustrate the 
dissipation properties.
\end{abstract}

\keywords{Entropy, finite differences, thin-film equation, DLSS equation,
discrete chain rule, denoising.}

\subjclass[2000]{35K30, 35Q68, 65M06, 65M12.}

\maketitle


\section{Introduction}

The design of numerical schemes that preserve the structure of the associated
partial differential equations is an important task in numerical mathematics. 
In this paper, we develop new
finite-difference approximations conserving the mass, preserving the positivity,
and dissipating the entropy of nonlinear fourth-order parabolic equations
of the type
\begin{equation}\label{1.eq}
  \pa_t u = -J_x, \quad J = u^\beta u_{xxx}+au^{\beta-1}u_{xx}u_x + bu^{\beta-2}u_x^3
	\quad\mbox{in }\T,
\end{equation}
where $\T=\R/\Z$ is the one-dimensional torus, $a$, $b\in\R$, and $\beta\ge 0$,
together with the initial condition $u(0)=u^0$ in $\T$. We also discuss briefly
the discretization of multi-dimensional equations; see Section \ref{sec.tfe}. 

A special case of \eqref{1.eq} (with $a=b=0$) is the thin-film equation 
\begin{equation}\label{1.tfe}
  \pa_t u = -(u^\beta u_{xxx})_x, 
\end{equation}
which models the flow of a thin liquid along a solid surface with film height $u(x,t)$
or the thin neck of a Hele-Shaw flow in the lubrication approximation \cite{Ber95}. 
The multi-dimensional version is given by $\pa_t u = -\diver(u^\beta\na\Delta u)$
and its discretization will be discussed in Section \ref{sec.tfe}. 
Another example (with $a=-2$, $b=1$,
$\beta=0$) is the Derrida--Lebowitz--Speer--Spohn (DLSS) equation 
\begin{equation}\label{1.dlss}
  \pa_t u = -2\bigg(u\bigg(\frac{(\sqrt{u})_{xx}}{\sqrt{u}}\bigg)_x\bigg)_x
	= -\bigg(u_{xxx}-2\frac{u_{xx}u_x}{u} + \frac{u_x^3}{u^2}\bigg)_x,
\end{equation}
which arises as a scaling limit of
interface fluctuations in spin systems \cite{DLSS91} and describes the evolution
of the electron density $u(x,t)$ in a quantum semiconductor under simplifying
assumptions.

Equations \eqref{1.tfe} and \eqref{1.dlss} conserve the mass
$\int_\T u(x,t)dx$ and preserve the positivity of the solution, even for
the corresponding multi-dimensional versions \cite{DGG98,JuMa08}. They also dissipate
the entropy\footnote{Strictly speaking, equation \eqref{1.eq}
{\em produces} entropy and {\em dissipates} energy, but the notion
{\em entropy dissipation} seems to be common in numerical schemes.} 
in the sense that there exists a Lyapunov functional 
$S(t)=\int_\T s(u(x,t))dx$ with entropy density $s(u)$ such that the entropy production
$-dS/dt$ provides gradient estimates. 
For instance, \eqref{1.tfe} and \eqref{1.dlss} dissipate the entropy density
$s(u)=s_\alpha(u)$, where
\begin{align}
  s_\alpha(u) = \frac{u^\alpha}{\alpha(\alpha-1)} &\quad\mbox{for }\alpha>0,\
	\alpha\neq 0,1, \nonumber \\
  s_0(u) = -\log u &\quad\mbox{for }\alpha=0, \label{1.ent} \\
	s_1(u) = u(\log u-1) &\quad\mbox{for }\alpha=1, \nonumber
\end{align}
if $\frac32\le\alpha+\beta\le 3$ (thin-film equation) and
$0\le\alpha\le\frac32$ (DLSS equation); see \cite[Section 4]{JuMa06}.
These bounds are optimal.
We call $s_1(u)$ the Shannon entropy (density), $s_0(u)$ the logarithmic entropy,
and $s_\alpha(u)$ for $\alpha\neq 0,1$ a R\'enyi entropy. It holds that
$s_\alpha(u)\to s_0(u)$ pointwise for $\alpha\to 0$ and $s_\alpha(u)\to u\log u$
pointwise for $\alpha\to 1$. We prefer to define $s_1(u)$ as in \eqref{1.ent}
to avoid the additional term in $d(u\log u)/du=\log u+1$ which would complicate
the subsequent computations.
 
The proof of the dissipation of the entropy is based on suitable integrations by 
parts and the chain rule \cite{JuMa06}. 
On the discrete level, we face the problem that the chain rule is generally not
available. We are aware of two general strategies.

The first strategy is to exploit the gradient-flow structure of the
parabolic equation (if it exists). It involves only one integration by
parts, and the discrete chain rule can be formulated by means of suitable
mean functions. This idea was elaborated as the
Discrete Variational Derivative Method for finite-difference approximations 
\cite{FuMa10}. The gradient-flow formulation (with respect to the $L^2$-Wasserstein
metric) yields a natural semi-discretization in time of the evolution using the
minimizing movement scheme in finite-dimensional spaces from finite-volume
or finite-difference approximations. 
These techniques were used in \cite{DMM10,HuLi19,MaMa16,MaOs17} for
the multi-dimensional DLSS equation. It allows for the proof of entropy dissipation
of the Shannon entropy and the Fisher information $\int_\T(\sqrt{u})_x^2 dx$,
but not for general R\'enyi entropies, since no Wasserstein gradient-flow formulation
seems to be available for these functionals. The thin-film equation
with $0<\beta<1$ is shown in \cite{LMS12} to be a gradient flow with respect to a 
weighted Wasserstein metric. In the work \cite{ZhBe00}, a finite-difference scheme 
that dissipates the discrete $H^1$ norm of the solution to the one-dimensional
thin-film equation was analyzed.

The minimizing movement scheme is based on the implicit Euler method. We mention
that higher-order time discretizations were investigated too, in the framework
of semi-discrete problems; see \cite{BEJ14} using the two-step BDF method and
\cite{JuMi15} using one-leg multistep generalizations. 
A generic framework for Galerkin methods in space and discontinuous Galerkin methods 
in time was presented in \cite{Egg19}.

The second strategy uses time-continuous Markov chains on finite state spaces.
Birth-death processes that define the Markov chain can be interpreted as a 
finite-volume discretization of a one-dimensional Fokker--Planck equation,
and the dissipation of the discrete Shannon entropy can be proved.
The nonlinear integrations by parts are reduced to a discrete Bochner-type
inequality \cite{CDP09,FaMa15,JuYu17}, which is obtained by identifying the 
Radon--Nikodym derivative of a measure involving the jump rates of the Markov chain 
\cite[Section 2]{BCDP06}. It seems that this idea is restricted to linear
diffusion equations.

In this paper, we suggest a third strategy. The idea is to write the flux $J$
as a combination of derivatives of the function $s'(u)$. 
This allows for integrations by parts that can be extended to
the discrete level and it avoids the application of the chain rule. More precisely,
we determine two functions $A$ and $B$ depending on $v:=s'(u)$ and 
its derivatives such that $J=A_x-vB_x$.
The function $s'(u)$ is known in thermodynamics as the chemical potential,
and the formulation of the flux in terms of the chemical potential seems to be 
natural from a thermodynamic viewpoint. We apply this idea to fourth-order parabolic
equations for the first time. It turns out that for $s=s_\alpha$
with $\alpha\neq 1$, we can write
$$
  A = \frac{u^{\alpha+\beta}}{(\alpha-1)^2 v}(\lambda_1\xi_2+\lambda_2\xi_1^2), \quad
  B = \frac{u^{\alpha+\beta}}{(\alpha-1)^2 v^2}(\lambda_3\xi_2+\lambda_4\xi_1^2),
$$
where $\xi_1=v_x/v$, $\xi_2=v_{xx}/v$, and the coefficients $\lambda_i$ depend
on $a$, $b$, $\alpha$, and $\beta$; see \eqref{2.lam} and \eqref{2.lam0}.
Integrating by parts twice and using equation \eqref{1.eq} gives for
$S_\alpha=\int_\T s_\alpha(u)dx$:
\begin{equation*}
  \frac{dS_\alpha}{dt} = \int_\T Jv_x dx
	= -\int_\T \big(v_{xx}A - (vv_{x})_xB\big)dx.
\end{equation*}
The task is to show that the integrand, written as a polynomial in 
$(\xi_1,\xi_2)$, is nonnegative for all values of $(\xi_1,\xi_2)\in\R^2$. 
It follows from the product rule $(vv_x)_x=vv_{xx}+v_x^2$ that, for $\alpha\neq 1$,
\begin{align}
  \frac{dS_\alpha}{dt} &= -\int_\T(\xi_2 vA - (\xi_2+\xi_1^2)v^2 B)dx \nonumber \\
  &  = -\int_\T \frac{u^{\alpha+\beta}}{(\alpha-1)^2}\big((\lambda_1-\lambda_3)\xi_2^2
  + (\lambda_2-\lambda_3-\lambda_4)\xi_2\xi_1^2 - \lambda_4\xi_1^4\big)dx.
  \label{1.ei}
\end{align}
Under certain conditions on the parameters, we expect that the integrand
is bounded from below by $u^{\alpha+\beta}(\xi_2^2+\xi_1^4)$, up to a factor, 
which yields some gradient estimates.

On the discrete level, we imitate this idea: The flux $J=A_x-vB_x$ and the variables
$\xi_1$, $\xi_2$ of the polynomials $A$ and $B$ is discretized by
central finite differences. For this, let $\T_h=\T/(h\Z)$ be a discrete
torus with grid size $h>0$ and define the scheme
\begin{equation}\label{1.d1}
  \pa_t u_i = -\frac{1}{h}(J_{i+1/2}-J_{i-1/2}), \quad
	J_{i+1/2} = \frac{1}{h}(A_{i+1}-A_i) - \frac{1}{2h}(v_{i+1}+v_i)(B_{i+1}-B_i),
\end{equation}
with the initial condition $u_i(0)=u^0(i)$ for $i\in\T_h$, where $u_i=u(i)$, 
$v_i=s'(u_i)$, and $A_i$ and $B_i$ are the polynomials $A$ and $B$, evaluated at 
$i\in\T_h$, respectively; see \eqref{3.AB}. 
We show below that with the discrete entropy $S_\alpha^h=h\sum_{i\in\T_h}
s_\alpha(u)$, the discrete analog of \eqref{1.ei} becomes
$$
  \frac{dS_\alpha^h}{dt}
	= -h\sum_{i\in\T_h}\big(\xi_{2,i}v_iA_i - (v_i\xi_{2,i}+\xi_{1,i}^2)v_i^2B_i\big),
$$
where $\xi_{1,i}$, $\xi_{2,i}$ approximate $v_x(i)/v(i)$, $v_{xx}(i)/v(i)$,
respectively. This yields exactly the polynomial of the continuous case. Thus,
we obtain the same conditions on the parameters as for the continuous equation.

We still need a discrete analog of the product rule
 $(vv_x)_x=vv_{xx}+v_x^2$ to conclude. This is done by carefully choosing 
$\xi_{1,i}$ and $\xi_{2,i}$. Definition \eqref{3.xi21} ensures that
$v_i^2(\xi_{2,i}+\xi_{1,i}^2)= (v_{i+1}^2-2v_i+v_{i-1}^2)/(2h^2)$ which approximates
$\frac12(v^2)_{xx}=vv_{xx}+v_x^2$. This choice is used in the central scheme
\eqref{1.d1} for $J_{i+1/2}$. Noncentral schemes require different definitions
of $\xi_{1,i}$ and $\xi_{2,i}$; see Remark \ref{rem.xi}.

A drawback of our technique is that scheme \eqref{1.d1} depends on the entropy
to be dissipated. The scheme does not dissipate all admissible entropies.
In applications, however, one usually wants to dissipate only that entropy
which is physically relevant. 

Our main results can be sketched as follows:
\begin{itemize}
\item Lyapunov functional: Let $\alpha\ge 0$, $\alpha\neq 1$ and assume that
\begin{equation}\label{1.cond}
  K(\alpha,\beta) := -2\alpha^2+(3a-4\beta+9)\alpha-2\beta^2+(3a+9)\beta-9(a+b+1) 
	\ge 0.
\end{equation}
Then the continuous entropy $S_\alpha$ {\em and} the discrete entropy $S_\alpha^h$
are dissipated in the sense that $dS_\alpha/dt\le 0$ and $dS_\alpha^h/dt\le 0$,
i.e., $S_\alpha$ and $S_\alpha^h$ are Lyapunov functionals for $u(t)$ and $u_i(t)$,
respectively; see Theorems \ref{thm.ent} and \ref{thm.ent2}. Condition
\eqref{1.cond} is optimal for the thin-film and DLSS equations.
\item Entropy dissipation: 
Let $\alpha\ge 0$, $\alpha\neq 1$ and assume that $K(\alpha,\beta)>0$. Then
there exists a constant $c(\alpha,\beta)>0$ such that for all $t>0$,
\begin{align}
  \frac{dS_\alpha}{dt} + c(\alpha,\beta)\int_\T u^{\alpha+\beta}
	(\xi_2^2+\xi_1^4)dx &\le 0, \label{1.cei} \\
  \frac{dS_\alpha^h}{dt} + c(\alpha,\beta)
	h\sum_{i\in\T_h}\bar u_i^{\alpha+\beta}
	(\xi_{2,i}^2+\xi_{1,i}^4) &\le 0, \label{1.dei}
\end{align}
where $\xi_1=v_x/v$, $\xi_2=v_{xx}/v$, $v=s_\alpha(u)$, 
$\bar u_i$ is an arbitrary average of $(u_i)$, and $\xi_{1,i}$, 
$\xi_{2,i}$ are defined in \eqref{3.xi21}. This result is proved in
Theorems \ref{thm.ent} and \ref{thm.ent2}.

\item Case $\alpha=1$: For the case $\alpha=1$, we need the formulation
$J=A_x-wB_x$ instead of $J=A_x-vB_x$, where $w=s_0'(u)=-u^{-1}$ and $v=\log u$,
since $J$ generally does not depend on the logarithm. For details, see
Proposition \ref{prop.ent1}.

\item Case $\alpha=0$: We show that scheme \eqref{1.d1}
with $\alpha=0$ possesses global positive solutions. This result is a consequence
of the discrete entropy inequality and mass conservation, which imply that
$h\sum_{i\in\T_h}(u_i(t)-\log u_i(t))$ is bounded for all $t>0$. Consequently, 
$u_i(t)-\log u_i(t)$ is bounded for all $i\in\T_h$ and $t>0$, and since the
function $s\mapsto s-\log s$ tends to infinity if either $s\to 0$ or $s\to\infty$,
this proves that $u_i(t)$ is bounded from below and above. We refer to 
Proposition \ref{prop.ex} for details.

\item Multi-dimensional case: In principle, the multi-dimensional case can be treated
using functions $A$ and $B$ with many variables. Practically, however, the
computations are becoming too involved and it may be unclear how to
discretize mixed derivatives. One idea to overcome this issue is
to use scalar variables only, like $\xi_1=|\na v|/v$ and $\xi_2=\Delta v/v$.
This allows us to treat the multi-dimensional thin-film equation; see
Proposition \ref{prop.tfe}.
\end{itemize}

The paper is organized as follows. We prove the continuous entropy inequality
\eqref{1.cei} in Section \ref{sec.cont} and the discrete entropy inequality
\eqref{1.dei} in Section \ref{sec.disc}. A scheme for the multi-dimensional
thin-film equation is proposed and analyzed in Section \ref{sec.tfe}.
Numerical simulations for the thin-film and DLSS equations in one space
dimension and for a thin-film equation in two space dimensions
are presented in Section \ref{sec.sim}.


\section{General continuous equation}\label{sec.cont}

To prepare the discretization, we need to analyze the entropy dissipation properties
of the continuous equation \eqref{1.eq}. We show first that $J$ can be written
as $A_x-vB_x$ with $v=s_\alpha'(u)$ and functions $A$ and $B$ which depend
on $v$, $v_x$, and $v_{xx}$.

\begin{lemma}
Let $J$ be given as in \eqref{1.eq} and $s_\alpha$ as in \eqref{1.ent}
and let $\alpha\ge 0$ satisfy $\alpha\neq 1$. Then $J = A_x - vB_x$, where
\begin{align*}
  A &= u^{\alpha+\beta}\big(\lambda_1u^{2-2\alpha}(s_\alpha'(u))_{xx}
	+ \lambda_2(\alpha-1)u^{3-3\alpha}(s_\alpha'(u))_x^2\big) \nonumber \\
	&= \frac{u^{\alpha+\beta}}{(\alpha-1)^2v}\bigg(\lambda_1\frac{v_{xx}}{v}
	+ \lambda_2\bigg(\frac{v_x}{v}\bigg)^2\bigg), \nonumber \\
	B &= u^{\alpha+\beta}\big(\lambda_3(\alpha-1)u^{3-3\alpha}(s_\alpha'(u))_{xx}
	+ \lambda_4(\alpha-1)^2u^{4-4\alpha}(s_\alpha'(u))_x^2\big) \nonumber \\
	&= \frac{u^{\alpha+\beta}}{(\alpha-1)^2v^2}\bigg(\lambda_3\frac{v_{xx}}{v}
	+ \lambda_4\bigg(\frac{v_x}{v}\bigg)^2\bigg) \nonumber
\end{align*}
and 
\begin{align}
  \lambda_1 &= \frac{-2\alpha^2+(\beta+5)\alpha+a\beta-\beta^2-a-2b-3}{(\alpha-1)
	(\beta-2\alpha+3)} + \frac{2(\alpha-1)}{\beta-2\alpha+3}\lambda_4, \nonumber \\
	\lambda_2 &= \frac{2\alpha^2-(a+7)\alpha+2a+b+6}{(\alpha-1)(\beta-2\alpha+3)}
	+ \frac{\beta-3\alpha+4}{\beta-2\alpha+3}\lambda_4, \label{2.lam} \\
	\lambda_3 &= \frac{(a+1)\beta-\beta^2-a-2b}{(1-\alpha)(\beta-2\alpha+3)}
	+ \frac{2(\alpha-1)}{\beta-2\alpha+3}\lambda_4, \nonumber
\end{align}
with $\lambda_4\in\R$ being a free parameter.
\end{lemma}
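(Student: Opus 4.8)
The plan is to verify the claimed identity $J = A_x - vB_x$ directly by substitution. The strategy is to express everything in terms of $u$ and its derivatives $u_x$, $u_{xx}$, $u_{xxx}$, expand both sides, and match coefficients. First I would compute $v = s_\alpha'(u)$ explicitly: since $s_\alpha(u) = u^\alpha/(\alpha(\alpha-1))$ for $\alpha \neq 0,1$, we have $v = u^{\alpha-1}/(\alpha-1)$, so that $v_x = u^{\alpha-2}u_x$ and $v_{xx} = u^{\alpha-2}u_{xx} + (\alpha-2)u^{\alpha-3}u_x^2$. This lets me rewrite the compact forms of $A$ and $B$ (the ones involving $v_{xx}/v$ and $(v_x/v)^2$) back into the $u$-variable expressions given on the first lines, confirming the two representations of $A$ and $B$ agree; this is a routine check using $v_x/v = (\alpha-1)u^{-1}u_x$ and similar.

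Next I would compute $A_x$ and $B_x$ in terms of $u, u_x, u_{xx}, u_{xxx}$. Both $A$ and $B$ have the schematic form $u^{\gamma}u_{xx} + (\text{const})u^{\gamma-1}u_x^2$ for appropriate exponents, so differentiating produces terms in $u^{\gamma-1}u_{xx}u_x$, $u^{\gamma}u_{xxx}$, and $u^{\gamma-2}u_x^3$. After forming $A_x - vB_x$ (recalling $v = u^{\alpha-1}/(\alpha-1)$ multiplies $B_x$ and shifts its exponents by $\alpha-1$), I would collect the result as a linear combination of the three monomials $u^\beta u_{xxx}$, $u^{\beta-1}u_{xx}u_x$, and $u^{\beta-2}u_x^3$. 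The claim is precisely that the coefficient of $u^\beta u_{xxx}$ equals $1$, the coefficient of $u^{\beta-1}u_{xx}u_x$ equals $a$, and the coefficient of $u^{\beta-2}u_x^3$ equals $b$, matching the flux $J$ in \eqref{1.eq}.

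This matching produces three linear equations in the four unknowns $\lambda_1,\lambda_2,\lambda_3,\lambda_4$. Since there are three constraints and four parameters, the solution set is a one-parameter family, which is exactly why $\lambda_4$ appears as a free parameter and $\lambda_1,\lambda_2,\lambda_3$ are expressed in terms of it in \eqref{2.lam}. The bulk of the proof is therefore solving this linear system: I expect the $u_{xxx}$-coefficient equation to pin down the combination giving the denominators $(\alpha-1)(\beta-2\alpha+3)$, and the $u_x^3$- and $u_{xx}u_x$-coefficient equations to produce the numerators. The factor $\beta - 2\alpha + 3$ appearing in every denominator strongly suggests it arises as the coefficient governing how the free parameter $\lambda_4$ redistributes among the others after eliminating one monomial.

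The main obstacle will be the bookkeeping in differentiating $A$ and $B$ and in the subtraction $A_x - vB_x$: the exponents of $u$ shift in several places (by $\alpha+\beta$ from the prefactor, by $2-2\alpha$ or $3-3\alpha$ inside, and by $\alpha-1$ from the factor $v$ multiplying $B_x$), so tracking that all contributions genuinely collapse onto the single exponent family $u^{\beta-k}$ for the three target monomials requires care. Once the exponents are confirmed to align, the remaining work is purely the algebra of solving the $3\times 3$ (underdetermined) linear system and simplifying the rational coefficients into the stated form, which is tedious but routine. I would organize the computation by first treating the highest-derivative monomial $u^\beta u_{xxx}$ in isolation to fix one relation, then descend to the lower-order monomials.
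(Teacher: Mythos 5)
Your proposal follows essentially the same route as the paper: expand $A$ and $B$ as combinations of the monomials $u^{\gamma}u_{xx}$ and $u^{\gamma-1}u_x^2$, form $A_x - vB_x$ with $v=u^{\alpha-1}/(\alpha-1)$, match the coefficients of $u^\beta u_{xxx}$, $u^{\beta-1}u_{xx}u_x$, and $u^{\beta-2}u_x^3$ against $1$, $a$, $b$, and solve the resulting underdetermined linear system with $\lambda_4$ as the free parameter. The only point you gloss over is $\alpha=0$, where $v=s_0'(u)=-u^{-1}$ comes from a separately defined entropy density; the paper redoes the computation in that case and checks that the coefficients agree with \eqref{2.lam} at $\alpha=0$ (which they do, since $-u^{-1}=u^{\alpha-1}/(\alpha-1)$ there), so this is a cosmetic rather than substantive omission.
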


The lemma shows in particular that the formulation $J=A_x-vB_x$ is not unique.
This fact is used to optimize later the range of admissible parameters
$\alpha$, $\beta$, $a$, and $b$.

\begin{proof}
Let $\alpha>0$ with $\alpha\neq 1$. A direct computation yields
\begin{align*}
  A &= \lambda_1 u^\beta u_{xx} + \big((\lambda_1+\lambda_2)\alpha-2\lambda_1-\lambda_2
	\big)u^{\beta-1}u_x^2, \\
  B &= \lambda_3(\alpha-1)u^{\beta-\alpha+1}u_{xx} + \big((\lambda_3+\lambda_4)\alpha
	-2\lambda_3-\lambda_4\big)u^{\beta-\alpha}u_x^2.
\end{align*}
Inserting these expressions into $A_x-vB_x$ and simplifying leads to
\begin{align*}
  A_x &- \frac{u^{\alpha-1}}{\alpha-1}B_x
	= (\lambda_1-\lambda_3)u^\beta u_{xxx} \\
	&\phantom{xx}{}+ \big((2\lambda_1+2\lambda_2-\lambda_3
	-2\lambda_4)\alpha + (\lambda_1-\lambda_3)\beta
	- 4\lambda_1-2\lambda_2+3\lambda_3+2\lambda_4\big)u^{\beta-1}
	u_{xx}u_x \\
	&\phantom{xx}{}+ \big((\lambda_3+\lambda_4)\alpha^2 
	+ (\lambda_1+\lambda_2-\lambda_3-\lambda_4)\alpha\beta
	- (\lambda_1+\lambda_2+2\lambda_3+\lambda_4)\alpha \\
	&\phantom{xx}{}
	+ (-2\lambda_1-\lambda_2+2\lambda_3+\lambda_4)\beta + 2\lambda_1+\lambda_2
	\big)u^{\beta-2}u_x^3.
\end{align*}
We identify the coefficients with those in the expression \eqref{1.eq} for $J$:
\begin{align*}
  1 &= \lambda_1-\lambda_3, \\
  a &= (2\lambda_1+2\lambda_2-\lambda_3-2\lambda_4)\alpha + (\lambda_1-\lambda_3)\beta
	- 4\lambda_1-2\lambda_2+3\lambda_3+2\lambda_4, \\
	b &= (\lambda_3+\lambda_4)\alpha^2 
	+ (\lambda_1+\lambda_2-\lambda_3-\lambda_4)\alpha\beta
	- (\lambda_1+\lambda_2+2\lambda_3+\lambda_4)\alpha \\
	&\phantom{xx}{}
	+ (-2\lambda_1-\lambda_2+2\lambda_3+\lambda_4)\beta + 2\lambda_1+\lambda_2.
\end{align*}
The general solution of this linear system for 
$(\lambda_1,\lambda_2,\lambda_3,\lambda_4)$, with free parameter $\lambda_4\in\R$, 
gives \eqref{2.lam}.

Next, let $\alpha=0$. The ansatz for $A$ and $B$ becomes
\begin{align*}
  A &= u^\beta\big(\lambda_1 u^2(-u^{-1})_{xx} - \lambda_2 u^3(-u^{-1})_x^2\big), \\
  B &= u^\beta\big(-\lambda_3 u^3(-u^{-1})_{xx} + \lambda_4 u^4(-u^{-1})_x^2\big).
\end{align*}
Then
\begin{align*}
  A_x-vB_x &= (\lambda_1-\lambda_3)u^\beta u_{xxx}
	+ \big((\lambda_1-\lambda_3)\beta-4\lambda_1-2\lambda_2+3\lambda_3+2\lambda_4\big)
	u^{\beta-1}u_{xx}u_x \\
	&\phantom{xx}{}
	+ \big((-2\lambda_1-\lambda_2+2\lambda_3+\lambda_4)\beta+2\lambda_1+\lambda_2\big)
  u^{\beta-2}u_x^3.
\end{align*}
Identifying the coefficients with those in \eqref{1.eq} again gives a linear
system for the parameters $(\lambda_1,\lambda_2,\lambda_3,\lambda_4)$. The general solution
reads as
\begin{align}
  \lambda_1 &= \frac{\beta^2-a\beta+a+2b+3}{\beta+3} - \frac{2}{\beta+3}\lambda_4, 
	\nonumber \\
  \lambda_2 &= -\frac{2a+b+6}{\beta+3} + \frac{\beta+4}{\beta+3}\lambda_4, 
	\label{2.lam0} \\
	\lambda_3 &= \frac{\beta^2-a\beta-\beta+a+2b}{\beta+3} 
	- \frac{2}{\beta+3}\lambda_4, \nonumber
\end{align}
These expressions are the same as \eqref{2.lam} with $\alpha=0$.
\end{proof}

For the following theorem, we recall definition \eqref{1.ent} of $s_\alpha$ and set
$S_\alpha(u)=\int_\T s_\alpha(u)dx$.

\begin{theorem}\label{thm.ent}
Let $u$ be a smooth positive solution to \eqref{1.eq} and let $\alpha\ge 0$. 
If $K(\alpha,\beta)\ge 0$ (see definition \eqref{1.cond}), 
then $S_\alpha$ is a Lyapunov functional, i.e.\ $dS_\alpha/dt\le 0$ for all $t>0$.
If $K(\alpha,\beta)>0$ then there exists $c(\alpha,\beta)>0$ such that for all $t>0$,
$$
  \frac{dS_\alpha}{dt} + c(\alpha,\beta)\int_\T u^{\alpha+\beta}
	\big(u^{2(1-\alpha)}(u^{\alpha-1})_{xx}^2 + u^{4(1-\alpha)}(u^{\alpha-1})_x^4\big)dx
	\le 0
$$
and there exists another constant $C(\alpha,\beta)>0$ such that for all $t>0$,
$$
  \frac{dS_\alpha}{dt} + C(\alpha,\beta)\int_\T u^{\alpha+\beta}\big(u^{-2}u_{xx}^2
	+ u^{-4}u_x^4\big)dx \le 0.
$$
\end{theorem}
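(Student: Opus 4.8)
The plan is to differentiate the entropy in time, transfer all derivatives onto the flux by means of the decomposition $J=A_x-vB_x$ from the Lemma, and reduce the sign of $dS_\alpha/dt$ to the nonnegativity of a single quartic polynomial in the two scalar variables $\xi_1=v_x/v$ and $\xi_2=v_{xx}/v$, where $v=s_\alpha'(u)$. First I would compute, using $\pa_t u=-J_x$ and periodicity on $\T$,
\begin{equation*}
  \frac{dS_\alpha}{dt}=\int_\T s_\alpha'(u)\,\pa_t u\,dx=\int_\T v(-J_x)\,dx=\int_\T J\,v_x\,dx .
\end{equation*}
Inserting $J=A_x-vB_x$ and integrating by parts twice moves the derivatives off $A$ and $B$ and yields $\int_\T\big(-v_{xx}A+(vv_x)_xB\big)\,dx$. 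Substituting the explicit forms of $A$ and $B$, using the product rule $(vv_x)_x=vv_{xx}+v_x^2=v^2(\xi_2+\xi_1^2)$ together with $v_{xx}=v\xi_2$, and recalling the identification $\lambda_1-\lambda_3=1$ from the proof of the Lemma, I would arrive at
\begin{equation*}
  \frac{dS_\alpha}{dt}=-\int_\T\frac{u^{\alpha+\beta}}{(\alpha-1)^2}\,Q(\xi_1,\xi_2)\,dx,\qquad
  Q(\xi_1,\xi_2)=\xi_2^2+(\lambda_2-\lambda_3-\lambda_4)\xi_1^2\xi_2-\lambda_4\xi_1^4,
\end{equation*}
which is exactly the integrand announced in \eqref{1.ei}.

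The core of the argument is to show that the free parameter $\lambda_4$ can be chosen so that $Q\ge 0$ on all of $\R^2$, and that this is possible precisely when $K(\alpha,\beta)\ge 0$. Since $u$ is smooth and positive, $(\xi_1,\xi_2)$ ranges freely over $\R^2$, so I would read $Q$ as a quadratic in $\xi_2$ with leading coefficient $1$; it is nonnegative for every $\xi_2$ (and every $\xi_1$) if and only if its discriminant is nonpositive, that is, $(\lambda_2-\lambda_3-\lambda_4)^2+4\lambda_4\le 0$. By \eqref{2.lam}, the combination $\lambda_2-\lambda_3-\lambda_4$ is affine in $\lambda_4$, say $\lambda_2-\lambda_3-\lambda_4=D_0+D_1\lambda_4$ with $D_1=-3(\alpha-1)/(\beta-2\alpha+3)$, so the discriminant is a convex quadratic in $\lambda_4$ whose minimum over $\lambda_4$ is nonpositive exactly when $1+D_0D_1\ge 0$. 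A direct computation with \eqref{2.lam} gives the identity $1+D_0D_1=K(\alpha,\beta)/(\beta-2\alpha+3)^2$, so an admissible $\lambda_4$ exists if and only if $K(\alpha,\beta)\ge 0$; this proves $dS_\alpha/dt\le 0$, i.e.\ the Lyapunov property. For $\alpha=0$ the same computation is carried out with \eqref{2.lam0}.

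For the dissipation estimates I would assume $K(\alpha,\beta)>0$, so that the minimal discriminant is \emph{strictly} negative. Choosing the minimizing $\lambda_4$ and perturbing, the polynomial $Q-c_0(\xi_2^2+\xi_1^4)$ still has positive leading coefficient and nonpositive discriminant for some small $c_0>0$, whence $Q\ge c_0(\xi_2^2+\xi_1^4)$ on $\R^2$. Since $v=u^{\alpha-1}/(\alpha-1)$ gives $\xi_1=(u^{\alpha-1})_x/u^{\alpha-1}$ and $\xi_2=(u^{\alpha-1})_{xx}/u^{\alpha-1}$, one has $\xi_2^2=u^{2(1-\alpha)}(u^{\alpha-1})_{xx}^2$ and $\xi_1^4=u^{4(1-\alpha)}(u^{\alpha-1})_x^4$, which turns this coercivity into the first displayed estimate with $c(\alpha,\beta)=c_0/(\alpha-1)^2$.

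For the second estimate I would change variables to $\eta_1=u_x/u$ and $\eta_2=u_{xx}/u$, for which $\xi_1=(\alpha-1)\eta_1$ and $\xi_2=(\alpha-1)\big(\eta_2+(\alpha-2)\eta_1^2\big)$. Viewing $\xi_2^2+\xi_1^4$ as a quadratic form in $(\eta_2,\eta_1^2)$, its symmetric matrix has determinant $(\alpha-1)^6>0$ and positive trace, hence is positive definite and dominates $\eta_2^2+\eta_1^4=u^{-2}u_{xx}^2+u^{-4}u_x^4$ up to a positive constant; combining this with the first estimate yields the second. The main obstacle is the algebraic identity $1+D_0D_1=K(\alpha,\beta)/(\beta-2\alpha+3)^2$: it is the single step where the precise coefficients \eqref{2.lam}, and not merely the structure of the decomposition, are needed, and it is what pins the abstract criterion ``there exists an admissible $\lambda_4$'' to the explicit, and optimal, threshold $K\ge 0$.
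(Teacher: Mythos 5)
Your argument for $\alpha\neq 1$ is correct and follows essentially the same route as the paper: the same double integration by parts, the same polynomial $P(\xi_1,\xi_2)=\xi_2^2+(\lambda_2-\lambda_3-\lambda_4)\xi_2\xi_1^2-\lambda_4\xi_1^4$, the same discriminant condition $(\lambda_2-\lambda_3-\lambda_4)^2+4\lambda_4\le 0$ optimized over the free parameter $\lambda_4$ (your reformulation via $1+D_0D_1\ge 0$ with $D_1=-3(\alpha-1)/(\beta-2\alpha+3)$ is equivalent to the paper's explicit choice \eqref{2.lam4} and the identity $f(\lambda_4)=\tfrac{4}{9(\alpha-1)^2}K(\alpha,\beta)$), and the same perturbation and change-of-variables arguments for the two coercivity estimates; your determinant $(\alpha-1)^6$ is the paper's discriminant $(\alpha-1)^2$ up to normalization.

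However, there is a genuine gap: the theorem is stated for all $\alpha\ge 0$, and your proof silently excludes $\alpha=1$ (the Shannon entropy), which is roughly half of the paper's proof. Every ingredient you use degenerates there: the Lemma's decomposition $J=A_x-vB_x$ with $v=s_\alpha'(u)$ is only established for $\alpha\neq 1$ and cannot hold at $\alpha=1$, since $v=s_1'(u)=\log u$ while $J$ contains no logarithmic terms; the prefactors $(\alpha-1)^{-2}$ and the identity $v=u^{\alpha-1}/(\alpha-1)$ are meaningless; and your positive-definiteness argument for the second estimate collapses because the determinant $(\alpha-1)^6$ vanishes. The paper handles this case by writing $J=A_x-wB_x$ with the \emph{auxiliary} variable $w=s_0'(u)=-u^{-1}$ (keeping $v=\log u$ only as the test function in $\int_\T Jv_x\,dx$, where only its derivatives appear), which produces a structurally different polynomial $P_1(\xi_1,\xi_2)=(\lambda_1-\lambda_3)\xi_2^2+(-\lambda_1+\lambda_2-\lambda_4)\xi_2\xi_1^2-\lambda_2\xi_1^4$ in $\xi_1=w_x/w$, $\xi_2=w_{xx}/w$, a different optimal $\lambda_4=\tfrac19(\beta^2-(3a+14)\beta+9(a+b)+3)$, the condition $K(1,\beta)\ge 0$, and for the second estimate the form $\xi_2^2-4\xi_2\xi_1^2+5\xi_1^4$ with discriminant $1>0$. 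You would need to supply this (or a continuity/limiting argument in $\alpha$, which is not immediate since the constants are not shown to be uniform near $\alpha=1$) to cover the full range of the statement.
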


\begin{proof}
Let first $\alpha\ge 0$ with $\alpha\neq 1$. 
We calculate the time derivative of the entropy, using integration by parts twice:
\begin{align}
  \frac{dS_\alpha}{dt} &= \int_\T s_\alpha'(u)\pa_t u dx
	= -\int_\T vJ_x dx = \int_\T Jv_x dx \nonumber \\
  &= \int_\T(A_x - vB_x)v_x dx
	= -\int_\T\big(Av_{xx} - (vv_{xx}+v_x^2)B\big)dx \nonumber \\
  &= -\int_\T \frac{u^{\alpha+\beta}}{(\alpha-1)^2}\bigg[
	(\lambda_1-\lambda_3)\bigg(\frac{v_{xx}}{v}\bigg)^2
	+ (\lambda_2-\lambda_3-\lambda_4)\frac{v_{xx}}{v}\bigg(\frac{v_x}{v}\bigg)^2
	- \lambda_4\bigg(\frac{v_x}{v}\bigg)^4\bigg]dx, \nonumber \\
	&= -\int_\T \frac{u^{\alpha+\beta}}{(\alpha-1)^2}
	P\bigg(\frac{v_{x}}{v},\frac{v_{xx}}{v}\bigg)dx, \label{2.dSdt}
\end{align}
where 
\begin{equation}\label{poly}
  P(\xi_1,\xi_2) = (\lambda_1-\lambda_3)\xi_2^2+ 
  (\lambda_2-\lambda_3-\lambda_4)\xi_2\xi_1 - \lambda_4\xi_1^4.
\end{equation}
The right-hand side of \eqref{2.dSdt} is nonpositive if 
$P(\xi_1,\xi_2)\ge 0$ for all $(\xi_1,\xi_2)\in\R^2$. Taking into account that
$\lambda_1-\lambda_3=1$, this is the case if and only if
$$
  -4\lambda_4 - (\lambda_2-\lambda_3-\lambda_4)^2 \ge 0.
$$
In view of definition \eqref{2.lam} of $\lambda_2$ and $\lambda_3$, we may
interpret $\lambda_4$ as a free parameter und $\lambda_2=\lambda_2(\lambda_4)$
and $\lambda_3=\lambda_3(\lambda_4)$ as affine functions in $\lambda_4$. Therefore,
we require that
\begin{equation}\label{pos}
  f(\lambda_4) := -4\lambda_4 
	- \big(\lambda_2(\lambda_4)-\lambda_3(\lambda_4)-\lambda_4\big)^2 \ge 0.
\end{equation}
We choose the optimal value of $\lambda_4$ by computing the critical value of $f$:
\begin{align*}
  0 = f'(\lambda_4) &= \frac{2(-2\alpha^2 + (-3a+8\beta+3)\alpha - 3a\beta + \beta^2 
	+ 9(a+b) - 15\beta)}{(\beta-2\alpha+3)^2} \\
	&\phantom{xx}{}- \frac{18(\alpha-1)^2}{(\beta-2\alpha+3)^2}\lambda_4.
\end{align*}
This yields
\begin{equation}\label{2.lam4}
  \lambda_4 = \frac{-2\alpha^2 + (-3a+8\beta+3)\alpha - 3a\beta + \beta^2 + 9(a+b) 
	- 15\beta}{9(\alpha-1)^2}.
\end{equation}
Inserting $\lambda_4$ in \eqref{pos} leads to
$$
  0 \le f(\lambda_4) = \frac{4}{9(\alpha-1)^2}\big(-2\alpha^2+(3a-4\beta+9)\alpha
	-2\beta^2+(3a+9)\beta-9(a+b+1)\big),
$$
which is equivalent to $K(\alpha,\beta)\ge 0$, see \eqref{1.cond}.

If $K(\alpha,\beta)>0$, there exists $c_0(\alpha,\beta)>0$ such that for all 
$(\xi_1,\xi_2)\in\R^2$,
$P(\xi_1,\xi_2)\ge c_0(\alpha,\beta)(\xi_1^2+\xi_1^4)$. Inserting this information in 
\eqref{2.dSdt}, we infer that
\begin{align*}
  \frac{dS_\alpha}{dt} &\le -c_0(\alpha,\beta)\int_\T
  \frac{u^{\alpha+\beta}}{(\alpha-1)^2}
	\bigg[\bigg(\frac{v_{xx}}{v}\bigg)^2 + \bigg(\frac{v_x}{v}\bigg)^4\bigg]dx \\
	&= -c_0(\alpha,\beta)\int_\T u^{\alpha+\beta-2}\bigg[\bigg(\frac{u_{xx}}{u}\bigg)^2
	+ 2(\alpha-2)\frac{u_{xx}}{u}\bigg(\frac{u_x}{u}\bigg)^2
	+ (2\alpha^2-6\alpha+5)\bigg(\frac{u_x}{u}\bigg)^4\bigg]dx.
\end{align*}
The discriminant equals
$$
  1\cdot(2\alpha^2-6\alpha+5) - (\alpha-2)^2 = (\alpha-1)^2,
$$
and it is positive for all $\alpha\neq 1$. Therefore, there exists $k(\alpha)>0$ 
such that
$$
  \frac{dS_\alpha}{dt} \le -c_0(\alpha,\beta) k(\alpha)\int_\T u^{\alpha+\beta}
	\bigg(\bigg(\frac{u_{xx}}{u}\bigg)^2 + \bigg(\frac{u_x}{u}\bigg)^4\bigg)dx,
$$
and this gives the conclusion for $c(\alpha,\beta)=c_0(\alpha,\beta)/(\alpha-1)^2$
and $C(\alpha,\beta)=c_0(\alpha,\beta)k(\alpha)$ when $\alpha\neq 1$.

It remains to analyze the case $\alpha=1$. Here, we cannot formulate the flux
as $J=A_x-vB_x$ with $v=s_1'(u)=\log u$, since $J$ does not contain logarithmic terms.
Our idea is to write $J=A_x-wB_x$ with $w=-u^{-1}$ and functions $A$ and $B$
that depend on $w$, $w_x$, and $w_{xx}$. The time derivative of the
entropy $S_1$ can be written in terms of $w$ and its derivatives only, since
the logarithmic term $v=\log u$ only appears with its derivatives.

The formulation $J=A_x-wB_x$ corresponds to the expression used for $\alpha=0$.
In fact, we have
\begin{align*}
  A &= u^{\beta}\big(\lambda_1u^2(-u^{-1})_{xx} - \lambda_2u^3(-u^{-1})_x^2\big)
	= \frac{u^{\beta}}{w}\bigg(\lambda_1 \frac{w_{xx}}{w} 
	+ \lambda_2 \bigg(\frac{w_x}{w}\bigg)^2\bigg), \\
	B &= u^{\beta}\big(-\lambda_3u^3(-u^{-1})_{xx} + \lambda_4u^4(-u^{-1})_x^2\big)
	= \frac{u^{\beta}}{w^2}\bigg(\lambda_3 \frac{w_{xx}}{w} 
	+ \lambda_4 \bigg(\frac{w_x}{w}\bigg)^2\bigg),
\end{align*}
where $\lambda_1,\lambda_2,\lambda_3$ are given by \eqref{2.lam0} and $\lambda_4$
is a free parameter. As before, the time derivative becomes
$$
  \frac{dS_1}{dt} = \int_\T\pa_t u\log u dx = -\int_\T J_x v dx
	= \int_\T Jv_x dx = -\int_\T\big(Av_{xx}-B(v_xw)_x\big)dx.
$$
Set $\xi_1=w_x/w$ and $\xi_2=w_{xx}/w$. 
Since $v_{xx}=-w_{xx}/w+(w_x/w)^2=-\xi_2+\xi_1^2$ and $(v_xw)_x=-w_{xx}=-w\xi_2$, 
we obtain
\begin{align*}
  \frac{dS_1}{dt} &= -\int_\T \frac{u^\beta}{w}\big((\lambda_1\xi_2+\lambda_2\xi_1^2)
	(-\xi_2+\xi_1^2) + (\lambda_3\xi_2+\lambda_4\xi_1^2)\xi_2\big)dx \\
  &= -\int_\T u^{\beta+1}\big(-(\lambda_1\xi_2+\lambda_2\xi_1^2)(-\xi_2+\xi_1^2)
	- (\lambda_3\xi_2+\lambda_4\xi_1^2)\xi_2\big)dx \\
	&= -\int_\T u^{\beta+1} P_1(\xi_1,\xi_2) dx,
\end{align*}
where
\begin{equation}\label{poly2}
  P_1(\xi_1,\xi_2) = (\lambda_1-\lambda_3)\xi_2^2 + (-\lambda_1+\lambda_2-\lambda_4)
	\xi_2\xi_1^2 - \lambda_2\xi_1^4.
\end{equation}
Observe that $\lambda_1-\lambda_3=1$. Thus, $dS_1/dt\ge 0$ if the polynomial
$P_1$ is nonnegative for all $(\xi_1,\xi_2)\in\R^2$, which is the case if and only if
$$
  f_1(\lambda_4) := -4\lambda_2(\lambda_4) 
	- \big(-\lambda_1(\lambda_4)+\lambda_2(\lambda_4)-\lambda_4)^2 \ge 0.
$$
We choose the optimal value $\lambda_4$ from $f_1'(\lambda_4)=0$, i.e.
$$
  \lambda_4 = \frac19\big(\beta^2 - (3a+14)\beta + 9(a+b) + 3\big).
$$
Then $f(\lambda_4)\ge 0$ is equivalent to $K(1,\beta)\ge 0$.

Finally, if $K(1,\beta)>0$, we have $P_1(\xi_1,\xi_2)\ge c_0(1,\beta)(\xi_2^2+\xi_1^4)$
and consequently,
\begin{align*}
  \frac{dS_1}{dt} &\le -c_0(1,\beta)\int_\T u^{\beta+1}\bigg(
	u^2\bigg(\frac{1}{u}\bigg)_{xx}^2 + u^4\bigg(\frac{1}{u}\bigg)_x^4\bigg)dx \\
	&= -c_0(1,\beta)\int_\T u^{\beta+1}
	\big(u^{-2}u_{xx}^2 - 4u^{-3}u_{xx}u_x^2 + 5u^{-4}u_x^4\big)dx.
\end{align*}
The discriminant is positive and there exists $k(1)>0$ such that
$\xi_2^2-4\xi_2\xi_1^2+5\xi_1^4\ge k(1)(\xi_2^2+\xi_1^4)$. We infer that
$$
  \frac{dS_1}{dt} \le -c_0(1,\beta)k(1)\int_\T u^{\beta+1}
	\big(u^{-2}u_{xx}^2 + u^{-4}u_x^4\big)dx,
$$
which concludes the proof with $c(1,\beta)=c_0(1,\beta)$ and
$C(1,\beta)=c_0(1,\beta)k(1)$.
\end{proof}

\begin{remark}[Examples]\rm
The DLSS equation corresponds to \eqref{1.eq} if $a=-2$, $b=0$, and $\beta=0$.
Then condition \eqref{1.cond} becomes $0<\alpha\le \frac32$,
which is the optimal interval. Choosing $a=b=0$, we obtain the
thin-film equation, and condition \eqref{1.cond} is equivalent to
$\frac32\le \alpha+\beta\le 3$, which again is the optimal parameter range.
\qed
\end{remark}

\begin{remark}[Systematic integration by parts]\rm
The result of Theorem \ref{thm.ent} can be also derived by the method of 
systematic integration by parts of \cite{JuMa06}. 
Our proof is taylored in such a way that it can
be directly ``translated'' to the discrete level. Indeed, the method of \cite{JuMa06}
needs several chain rules that are not available on the discrete level and
our technique needs to be used.

Still, systematic integration by parts and our strategy are strongly related.
Systematic integration by parts means that we are adding so-called
integration-by-parts formulas whose derivatives vanishe. In this way, we
can derive \eqref{2.dSdt},
$$
  \frac{dS_\alpha}{dt}
  = \int_\T(A_x-vB_x)v_x dx + c_1\int_\T(Av_x-Bvv_x)_x dx
  = -\int_\T(Av_{xx}-B(vv_x)_x)dx,
$$
by choosing $c_1=-1$. In the method of systematic integration by parts, we
are adding a term of the type $c_2\int_\T(u^{\alpha+\beta}(v_x/v)^3)_x dx$
and optimize $c_2$. By contrast, the constant $c_1$ is fixed, but we optimize
$\lambda_4$ in the formulation of $A$ and $B$. In both cases, just one parameter
needs to be optimized.
\qed
\end{remark}


\section{General discretized equation}\label{sec.disc}

We ``translate'' the computations of the previous section to the discrete level.
For this, we use the discrete entropy
\begin{equation}\label{3.ent}
  S_\alpha^h(u) = h\sum_{i\in\T_h}s_\alpha(u_i),
	\quad\mbox{where } \alpha\ge 0,\ \alpha\neq 1,
\end{equation}
where $u_i=u(i)$ for $i\in\T_h$ and $s_\alpha$ is defined in \eqref{1.ent}.
We recall scheme \eqref{1.d1}:
\begin{equation}\label{3.sch}
  \pa_t u_i = -\frac{1}{h}(J_{i+1/2}-J_{i-1/2}), \quad
	J_{i+1/2} = \frac{1}{h}(A_{i+1}-A_i) - \frac{1}{2h}(v_{i+1}+v_i)(B_{i+1}-B_i),
\end{equation}
where the functions $A_i$ and $B_i$ are given by
\begin{equation}\label{3.AB}
  A_i = \frac{\bar u_i^{\alpha+\beta}}{(\alpha-1)^2v_i}(\lambda_1\xi_{2,i}
	+\lambda_2(\alpha-1)\xi_{1,i}^2), \quad
	B_i = \frac{\bar u_i^{\alpha+\beta}}{(\alpha-1)^2v_i^2}(\lambda_3\xi_{2,i}
	+\lambda_4(\alpha-1)\xi_{1,i}^2),
\end{equation}
$\bar u_i$ is an arbitrary average of $u$ around the point $ih$,
and $\xi_{2,i}$ and $\xi_{1,i}^2$ are discrete analogs of
$v_{xx}/v$ and $(v_x/v)^2$:
\begin{equation}\label{3.xi21}
 	\xi_{2,i} := \frac{1}{v_ih^2}(v_{i+1}-2v_i+v_{i-1}), \quad
  \xi_{1,i}^2 := \frac{1}{2v_i^2h^2}\big((v_{i+1}-v_i)^2+(v_i-v_{i-1})^2\big).
\end{equation}
The parameters $\lambda_i$ for $i=1,2,3,4$ are given by \eqref{2.lam} and 
\eqref{2.lam4}. For later use, we note that
\begin{align*}
  \xi_{1,i}^2 &= \frac{1}{2v_i^2h^2}\big((v_{i+1}^2-2v_i^2+v_{i-1}^2) 
	- 2v_i(v_{i+1}-2v_i+v_{i-1})\big) \\
	&= \frac{1}{2v_i^2h^2}(v_{i+1}^2-2v_i^2+v_{i-1}^2) - \xi_{2,i}
\end{align*}
implies that
\begin{equation}\label{3.xi}
  \frac{1}{2h^2}(v_{i+1}^2-2v_i^2+v_{i-1}^2)
	= v_i^2(\xi_{2,i}+\xi_{1,i}^2), \quad
	\frac{1}{h^2}(v_{i+1}-2v_i+v_{i-1}) = v_i\xi_{2,i},
\end{equation}

Recall definition \eqref{3.ent} of $S_\alpha^h$ and condition \eqref{1.cond} for
$K(\alpha,\beta)$.

\begin{theorem}\label{thm.ent2}
Let $(u_i)_{i\in\T_h}$ be a positive solution to \eqref{3.sch}--\eqref{3.xi21}
and let $\alpha\ge 0$, $\alpha\neq 1$.
If $K(\alpha,\beta)\ge 0$ then $dS_\alpha^h/dt\le 0$ for all $t>0$. 
Moreover, if $K(\alpha,\beta)>0$,
$$
  \frac{dS_\alpha^h}{dt} + c(\alpha,\beta)h\sum_{i\in\T_h}
	\bar u_i^{\alpha+\beta}
	\big(\xi_{2,i}^2 + \xi_{1,i}^4\big) \le 0
$$
with the same constant $c(\alpha,\beta)>0$ as in the proof of Theorem \ref{thm.ent}.
\end{theorem}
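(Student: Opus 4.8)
The plan is to mirror, line by line, the continuous computation \eqref{2.dSdt}, replacing each integration by parts with a discrete summation by parts (Abel summation) on the torus $\T_h$, where periodicity guarantees that no boundary terms appear. First I would differentiate the discrete entropy and insert the scheme \eqref{3.sch}, obtaining
$$
  \frac{dS_\alpha^h}{dt} = h\sum_{i\in\T_h} v_i\,\pa_t u_i
  = -\sum_{i\in\T_h} v_i\,(J_{i+1/2}-J_{i-1/2}).
$$
A first summation by parts (shifting the index in the second sum) moves the difference onto $v$ and yields $\sum_{i\in\T_h}(v_{i+1}-v_i)J_{i+1/2}$, which is the exact discrete analog of $\int_\T Jv_x\,dx$.

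Next I would insert the flux $J_{i+1/2}$ from \eqref{3.sch} and perform a second summation by parts on each of its two pieces. For the $A$-part this turns $\sum_i(v_{i+1}-v_i)(A_{i+1}-A_i)$ into $-\sum_i A_i(v_{i+1}-2v_i+v_{i-1})$; for the $B$-part the symmetric averaging factor $\frac12(v_{i+1}+v_i)$ is tailored so that $(v_{i+1}-v_i)(v_{i+1}+v_i)=v_{i+1}^2-v_i^2$, after which summation by parts produces $-\sum_i B_i(v_{i+1}^2-2v_i^2+v_{i-1}^2)$. The crucial step is to invoke the identities \eqref{3.xi}, namely $h^{-2}(v_{i+1}-2v_i+v_{i-1})=v_i\xi_{2,i}$ and $(2h^2)^{-1}(v_{i+1}^2-2v_i^2+v_{i-1}^2)=v_i^2(\xi_{2,i}+\xi_{1,i}^2)$; the latter is precisely the discrete replacement for the product rule $(vv_x)_x=vv_{xx}+v_x^2$ used in \eqref{2.dSdt}. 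After the $h$-powers reassemble, this brings the expression to
$$
  \frac{dS_\alpha^h}{dt}
  = -h\sum_{i\in\T_h}\big(v_i\xi_{2,i}A_i - v_i^2(\xi_{2,i}+\xi_{1,i}^2)B_i\big).
$$

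Finally I would substitute the definitions \eqref{3.AB} of $A_i$ and $B_i$. Because these are the pointwise discrete analogs of the continuous $A$ and $B$, the $v_i$-powers cancel and, using $\xi_{2,i}$ and $\xi_{1,i}^2$ in place of $v_{xx}/v$ and $(v_x/v)^2$, the summand becomes $\bar u_i^{\alpha+\beta}(\alpha-1)^{-2}P(\xi_{1,i},\xi_{2,i})$ with exactly the same polynomial $P$ as in \eqref{poly}. Since $\bar u_i>0$ and $\alpha\neq 1$, the weight is positive at every node, so the sign of $dS_\alpha^h/dt$ is governed nodewise by $P$, with no dependence on which average $\bar u_i$ is chosen. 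At this point the algebraic analysis of Theorem \ref{thm.ent} applies verbatim: under $K(\alpha,\beta)\ge 0$ one has $P\ge 0$, giving $dS_\alpha^h/dt\le 0$, and under $K(\alpha,\beta)>0$ one has $P(\xi_1,\xi_2)\ge c_0(\alpha,\beta)(\xi_2^2+\xi_1^4)$, which yields the stated dissipation estimate with $c(\alpha,\beta)=c_0(\alpha,\beta)/(\alpha-1)^2$, the same constant as before. The main obstacle is the second paragraph: one must verify that the two summations by parts, combined with the nonstandard definitions \eqref{3.xi21} and the symmetric averaging in \eqref{3.sch}, reproduce the continuous polynomial $P$ identically rather than merely up to $O(h)$ terms. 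Once the discrete product rule \eqref{3.xi} is in hand this is bookkeeping, so no new inequality beyond the one already settled in Theorem \ref{thm.ent} is required.
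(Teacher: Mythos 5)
Your proposal is correct and follows essentially the same route as the paper: two discrete summations by parts, the identities \eqref{3.xi} serving as the discrete product rule $(vv_x)_x=vv_{xx}+v_x^2$, and the observation that the summand reduces to the same polynomial $P$ from \eqref{poly}, so the sign analysis of Theorem \ref{thm.ent} applies verbatim. The only thing the paper does that you leave implicit is the explicit algebraic cancellation of the $v_i$-powers when substituting \eqref{3.AB}, which is indeed just bookkeeping.
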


\begin{proof}
Let $\alpha>0$, $\alpha\neq 1$.
We compute the time derivative of the discrete entropy, using summation by parts twice:
\begin{align*}
  \frac{dS_\alpha^h}{dt} &= h\sum_{i\in\T_h}s_\alpha'(u)\pa_t u_i
	= -\sum_{i\in\T_h}v_i(J_{i+1/2}-J_{i-1/2})
  = \sum_{i\in\T_h}(v_{i+1}-v_i)J_{i+1/2} \\
	&= \frac{1}{h}\sum_{i\in\T_h}(v_{i+1}-v_i)
	\bigg((A_{i+1}-A_i)-\frac12(v_{i+1}+v_i)(B_{i+1}-B_i)\bigg) \\
	&= \frac{1}{h}\sum_{i\in\T_h}\bigg((v_{i+1}-v_i)(A_{i+1}-A_i)
	- \frac12(v_{i+1}^2-v_i^2)(B_{i+1}-B_i)\bigg) \\
	&= -\frac{1}{h}\sum_{i\in\T_h}\bigg((v_{i+1}-2v_i+v_{i-1})A_i
	- \frac12(v_{i+1}^2-2v_i^2+v_{i-1}^2)B_i\bigg).
\end{align*}
In the last step, we recognize the discrete analog of the chain rule
$(v_xv)_x=\frac12(v^2)_{xx}$. Inserting \eqref{3.xi}, we find that
\begin{align*}
  \frac{dS_\alpha^h}{dt}
	&= -h\sum_{i\in\T_h}\big(\xi_{2,i}v_iA_i - (\xi_{2,i}+\xi_{1,i}^2)v_i^2B_i\big) \\
	&= -h\sum_{i\in\T_h}\frac{\bar u_i^{\alpha+\beta}}{(\alpha-1)^2}
	\big((\lambda_1-\lambda_3)\xi_{2,i}^2 
	+ (\lambda_2-\lambda_3-\lambda_4)\xi_{2,i}\xi_{1,i}^2 - \lambda_4\xi_{1,i}^4\big) \\
	&= -h\sum_{i\in\T_h}\frac{\bar u_i^{\alpha+\beta}}{(\alpha-1)^2}
	P(\xi_{1,i},\xi_{2,i}),
\end{align*}
where $P$ is the same polynomial as in \eqref{poly}.
The proof of Theorem \ref{thm.ent} shows that $P(\xi_{1,i},\xi_{2,i})\ge 0$ if
$K(\alpha,\beta)\ge 0$. Moreover, if the strict inequality $K(\alpha,\beta)>0$ holds,
$P(\xi_{1,i},\xi_{2,i})\ge c_0(\alpha,\beta)(\xi_{2,i}^2+\xi_{1,i}^4)$ with the 
same constant as in Theorem \ref{thm.ent}, which translates into the inequality
$$
  \frac{dS_\alpha}{dt} \le -c_0(\alpha,\beta)\sum_{i\in\T_h}
	\frac{\bar u_i^{\alpha+\beta}}{(\alpha-1)^2}
	\big(\xi_{2,i}^2 + \xi_{1,i}^4\big),
$$
finishing the proof. 
\end{proof}

In Theorem \ref{thm.ent}, the existence of positive solutions is assumed.
We show that such solutions exist globally, at least in case $\alpha=0$.

\begin{proposition}\label{prop.ex}
Let $\alpha=0$ and $u_i^0>0$ for $i\in\T_h$. 
Then there exists a global solution $(u_i)_{i\in\T_h}$ to scheme
\eqref{3.sch}--\eqref{3.xi21} and $u_i(0)=u_i^0$ for $i\in\T_h$
and constants $\kappa_1\ge\kappa_0>0$ such that
$$
  0 < \kappa_0 \le u_i(t)\le \kappa_1\quad\mbox{for all }i\in\T_h,\ t>0.
$$
\end{proposition}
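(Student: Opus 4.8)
The plan is to regard scheme \eqref{3.sch}--\eqref{3.xi21} with $\alpha=0$ (so that $v_i=s_0'(u_i)=-u_i^{-1}$) as an autonomous ODE system for the vector $(u_i)_{i\in\T_h}$ and to upgrade a local solution to a global one via a priori bounds supplied by mass conservation and the discrete entropy inequality. On the open set $U=\{(u_i)_{i\in\T_h}:u_i>0\text{ for all }i\}$, the quantities $v_i$, $\xi_{1,i}$, $\xi_{2,i}$, $A_i$, $B_i$ from \eqref{3.AB}--\eqref{3.xi21} are smooth functions of $(u_j)$, since neither $u_i$, $v_i$, nor $\bar u_i$ vanishes on $U$; hence the right-hand side of \eqref{3.sch} is a locally Lipschitz vector field on $U$. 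By the Picard--Lindel\"of theorem there is a unique maximal solution on some interval $[0,T^*)$ with $T^*\in(0,\infty]$, remaining in $U$.

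On $[0,T^*)$ I would extract two facts. Summing \eqref{3.sch} over $i\in\T_h$ makes the flux differences telescope on the torus, giving mass conservation $h\sum_{i\in\T_h}u_i(t)=h\sum_{i\in\T_h}u_i^0=:M$. Moreover, since the solution is positive, Theorem \ref{thm.ent2} applies (which requires $K(0,\beta)\ge 0$) and yields $dS_0^h/dt\le 0$, whence $S_0^h(t)\le S_0^h(0)$. Writing $g(s):=s-\log s$ and recalling $s_0(u)=-\log u$, the two facts combine into
$$
  h\sum_{i\in\T_h}g(u_i(t)) = M + S_0^h(t) \le M + S_0^h(0) =: C_0
$$
for all $t\in[0,T^*)$, with $C_0<\infty$ since $u_i^0>0$.

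The decisive step is to convert this single scalar estimate into uniform two-sided pointwise bounds. Because $g(s)=s-\log s\ge 1>0$ for every $s>0$, all summands are positive and each is dominated by the full sum: $g(u_i(t))\le C_0/h=:C$ for every $i\in\T_h$ and $t\in[0,T^*)$, where in fact $C=\sum_j g(u_j^0)\ge 1$. As $g$ is continuous on $(0,\infty)$ with $g(s)\to\infty$ both as $s\to 0^+$ and as $s\to\infty$, its sublevel set $\{s>0:g(s)\le C\}$ is a compact interval $[\kappa_0,\kappa_1]\subset(0,\infty)$, nonempty because $C\ge 1$. Consequently $\kappa_0\le u_i(t)\le\kappa_1$ for every $i\in\T_h$ and every $t\in[0,T^*)$.

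Finally, these bounds confine the trajectory to the compact set $[\kappa_0,\kappa_1]^{\T_h}$ lying in the interior of $U$. By the escape-time criterion for ODEs, $T^*<\infty$ would force the solution to leave every compact subset of $U$ as $t\to T^*$, which the bound forbids; therefore $T^*=\infty$ and the estimates hold for all $t>0$. I expect the main (though mild) obstacle to be the logical ordering rather than any computation: Theorem \ref{thm.ent2} presupposes a positive solution, so it must be invoked on the maximal interval $[0,T^*)$---where positivity holds by construction---and only afterwards do the resulting bounds rule out $T^*<\infty$. All remaining estimates are elementary.
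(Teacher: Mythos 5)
Your proposal is correct and follows essentially the same route as the paper: Picard--Lindel\"of for a local positive solution on the maximal interval, mass conservation plus the entropy inequality of Theorem \ref{thm.ent2} to bound $h\sum_i\big(u_i(t)-\log u_i(t)\big)$, the divergence of $s\mapsto s-\log s$ at $0$ and $\infty$ to get the two-sided bounds $\kappa_0\le u_i(t)\le\kappa_1$, and the escape-time criterion to conclude $T^*=\infty$. You merely make explicit two points the paper leaves implicit --- that positivity of each summand $u_i-\log u_i$ lets the scalar bound control each term individually, and that the argument tacitly uses $K(0,\beta)\ge 0$ so that Theorem \ref{thm.ent2} applies --- both of which are accurate observations.
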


\begin{proof}
Scheme \eqref{3.sch} is a system of ordinary differential equations.
According to the Picard--Lindel\"of theorem, there exists a unique local positive
differentiable solution $(u_i)_{i\in\T_h}$ on the maximal time interval
$[0,T)$ for some $T>0$. This solution can be extended to $[0,\infty)$ if the
functions $u_i(t)$ are uniformly positive and bounded. By Theorem \ref{thm.ent2},
$$
  h\sum_{i\in\T_h}(-\log u_i(t))\le h\sum_{i\in\T_h}(-\log u_i^0),
$$
Moreover, scheme \eqref{3.sch} conserves the mass, $h\sum_{i\in\T_h}u_i(t)
=h\sum_{i\in\T_h}u_i^0$. This shows that
$$
  h\sum_{i\in\T_h}(u_i(t)-\log u_i(t))\le h\sum_{i\in\T_h}(u_i^0-\log u_i^0).
$$
Since $s\mapsto s-\log s$ diverges for $s\to 0$ and $s\to \infty$, there exist
constants $\kappa_0>0$ and $\kappa_1>0$ such that $\kappa_0\le u_i(t)\le \kappa_1$
for all $i\in\T_h$ and $t>0$. Therefore, we can extend the solution globally.
\end{proof}

\begin{remark}[Noncentral scheme for $J_{i+1/2}$]\label{rem.xi}\rm
	A more direct discrete analog of $v_{xx}/v$ and $v_x/v$ is given by
	$$
	\xi_{2,i} = \frac{1}{v_ih^2}(v_{i+1}-2v_i+v_{i-1}), \quad
	\xi_{1,i}^2 = \frac{1}{v_i^2h^2}(v_i-v_{i-1})^2.
	$$
	In this situation, the scheme for $J$ needs to be noncentral,
	$$
	J_{i+1/2} = \frac{1}{h}(A_{i+1}-A_i) - \frac{1}{h}v_i(B_{i+1}-B_i),
	$$
	where $A_i$ and $B_i$ are as before. 
	Indeed, it follows from summation by parts for $\alpha\ge 0$, $\alpha\neq 1$ that
	\begin{align*}
	\frac{dS_\alpha}{dt}
	&= \frac{1}{h}\sum_{i\in\T_h}(v_{i+1}-v_i)
	\big((A_{i+1}-A_i) - v_i(B_{i+1}-B_i)\big) \\
	&= -\frac{1}{h}\sum_{i\in\T_h}\big[(v_{i+1}-2v_i+v_{i-1})A_i
	- \big((v_{i+1}-v_{i})v_{i} - (v_{i}-v_{i-1})v_{i-1}\big)B_i\big] \\
	&= -\frac{1}{h}\sum_{i\in\T_h}\big[(v_{i+1}-2v_i+v_{i-1})A_i
	- \big(v_i(v_{i+1}-2v_i+v_{i-1}) + (v_i-v_{i-1})^2\big)B_i\big] \\
	&= -h\sum_{i\in\T_h}\big(\xi_{2,i}v_iA_i - (\xi_{2,i}+\xi_{1,i}^2)v_i^2B_i\big) 
	= -h\sum_{i\in\T_h}\bar u_i^{\alpha+\beta}P(\xi_{1,i},\xi_{2,i}),
	\end{align*}
	and we can conclude as before.
	\qed
\end{remark}

The case $\alpha=1$ has to be treated in a slightly different way. 
Since $\lim_{\alpha\to 1}(\alpha-1)v_i=\lim_{\alpha\to 1} u_i^{\alpha-1}=1$,
we consider scheme
\begin{equation}\label{3.sch1}
\pa_t u_i = -\frac{1}{h}(J_{i+1/2}-J_{i-1/2}), \quad
J_{i+1/2} = \frac{1}{h}(A_{i+1}-A_i) - \frac{1}{2h}(w_{i+1}+w_i)(B_{i+1}-B_i),
\end{equation}
with $A_i$ and $B_i$ defined via
\begin{equation}\label{3.AB1}
  A_i = -\bar u_i^{\beta+1}(\lambda_1\xi_{2,i}-\lambda_2\xi_{1,i}^2), \quad
	B_i = \bar u_i^{\beta+1}(\lambda_3\xi_{2,i}-\lambda_4\xi_{1,i}^2), 
\end{equation}
and 
\begin{align}
  \xi_{2,i} &= \frac{1}{2h^2}\big((v_{i+1}-v_i)(w_{i+1}+w_i)-(v_i-v_{i-1})(w_i+w_{i-1})
	\big), \nonumber \\
  \xi_{1,i}^2 &= \xi_{2,i} + \frac{1}{h^2}(v_{i+1}-2v_i+v_{i-1}), \label{3.xi1}
\end{align}
where $v_i=\log u_i$ and $w_i=-u_i^{-1}$. The parameters $\lambda_1,\ldots,\lambda_4$
are given by \eqref{2.lam} and \eqref{2.lam4}.

\begin{proposition}\label{prop.ent1}
Let $\alpha=1$ and let $(u_i)_{i\in\T_h}$ be a positive solution to \eqref{3.sch1}, 
\eqref{3.AB1}, and \eqref{3.xi1}.
If $K(1,\beta)\ge 0$ then $dS_1^h/dt\le 0$ for all $t>0$. Moreover, 
if $K(1,\beta)>0$,
$$
  \frac{dS_1^h}{dt} + c(1,\beta)h\sum_{i\in\T_h}
	\bar u_i^{\beta+1}\big(\xi_{2,i}^2 + \xi_{1,i}^4\big) \le 0,
$$
with the same constant $c(1,\beta)>0$ as in the proof of Theorem \ref{thm.ent}.
\end{proposition}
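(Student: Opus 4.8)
The plan is to reproduce, at the discrete level, the two-step argument used for $\alpha=1$ in the proof of Theorem~\ref{thm.ent}, mirroring the discrete computation of Theorem~\ref{thm.ent2} but with the $w$-formulation of the flux ($w_i=-u_i^{-1}$, $v_i=\log u_i$) that \eqref{3.sch1}--\eqref{3.xi1} are built around. First I would compute the time derivative of the discrete entropy. Since $s_1'(u)=\log u$, so that $s_1'(u_i)=v_i$, differentiating $S_1^h$ and inserting scheme \eqref{3.sch1} gives, after one summation by parts on the torus,
$$
  \frac{dS_1^h}{dt} = h\sum_{i\in\T_h}v_i\,\partial_t u_i
  = \sum_{i\in\T_h}(v_{i+1}-v_i)J_{i+1/2}.
$$
Splitting $J_{i+1/2}$ into its two pieces yields the sum $\frac1h\sum_{i}(v_{i+1}-v_i)(A_{i+1}-A_i)$ coming from the $A$-part and the sum $-\frac1{2h}\sum_{i}(v_{i+1}-v_i)(w_{i+1}+w_i)(B_{i+1}-B_i)$ coming from the $B$-part; the appearance of the average $\frac12(w_{i+1}+w_i)$ here is exactly what encodes $J=A_x-wB_x$ on the grid.

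The second step is a further summation by parts on each of these two sums, and this is where the definitions \eqref{3.xi1} are tailored to take over. On the $A$-sum it produces $-\frac1h\sum_{i}(v_{i+1}-2v_i+v_{i-1})A_i$, and the second line of \eqref{3.xi1}, $\xi_{1,i}^2-\xi_{2,i}=h^{-2}(v_{i+1}-2v_i+v_{i-1})$, rewrites the second difference of $v$ as the discrete analog of $v_{xx}$. On the $B$-sum it produces the combination $(v_{i+1}-v_i)(w_{i+1}+w_i)-(v_i-v_{i-1})(w_i+w_{i-1})$, which is precisely $2h^2\xi_{2,i}$ by the first line of \eqref{3.xi1}; this mixed $v$--$w$ second difference is the discrete substitute for $(wv_x)_x$, playing here the role that the continuous identity $(wv_x)_x=-w_{xx}$ played in the proof of Theorem~\ref{thm.ent}. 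Substituting these two identities together with the expressions \eqref{3.AB1} for $A_i$ and $B_i$, I expect every monomial to reorganize into
$$
  \frac{dS_1^h}{dt} = -h\sum_{i\in\T_h}\bar u_i^{\beta+1}\,P_1(\xi_{1,i},\xi_{2,i}),
$$
with the very same polynomial $P_1$ of \eqref{poly2} and the same coefficients $\lambda_1,\dots,\lambda_4$ fixed by \eqref{2.lam} and \eqref{2.lam4}, so that no new algebra is needed beyond matching coefficients.

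Once this identity is in place, the conclusion is immediate from the $\alpha=1$ analysis already carried out in the proof of Theorem~\ref{thm.ent}. There it is shown that $P_1\ge 0$ on all of $\R^2$ exactly when $K(1,\beta)\ge 0$, which gives $dS_1^h/dt\le 0$; and that $K(1,\beta)>0$ yields $P_1(\xi_1,\xi_2)\ge c_0(1,\beta)(\xi_2^2+\xi_1^4)$ with the same constant, where $c(1,\beta)=c_0(1,\beta)$. Summing this pointwise-in-$i$ lower bound against the nonnegative weights $\bar u_i^{\beta+1}$ then produces the stated entropy-production estimate with the announced constant $c(1,\beta)$.

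I expect the only delicate point to be the bookkeeping in the double summation by parts: one must check that the $B$-flux term $\frac12(w_{i+1}+w_i)(B_{i+1}-B_i)$ generates exactly the mixed second difference defining $\xi_{2,i}$ in \eqref{3.xi1}, and that the signs of the two distinct second-order differences (the one giving $\xi_{1,i}^2-\xi_{2,i}$ and the one giving $2h^2\xi_{2,i}$) line up so that the cross term $\xi_{2,i}\xi_{1,i}^2$ and the quartic term $\xi_{1,i}^4$ emerge with precisely the coefficients of $P_1$. This is the discrete analog of replacing the chain and product rules by the tuned definitions \eqref{3.xi1}, and it is the whole reason the formulation $J=A_x-wB_x$ rather than $J=A_x-vB_x$ is needed for $\alpha=1$, since $v=\log u$ enters $dS_1^h/dt$ only through its differences. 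No positivity or existence issue intervenes here, as a positive solution is assumed in the hypothesis.
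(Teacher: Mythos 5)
Your proposal is correct and follows essentially the same route as the paper's proof: one summation by parts to reach $\sum_i(v_{i+1}-v_i)J_{i+1/2}$, a second summation by parts on the $A$- and $B$-parts, and then the two identities built into \eqref{3.xi1} (the $v$-second-difference giving $-\xi_{2,i}+\xi_{1,i}^2$ and the mixed $v$--$w$ difference giving $\xi_{2,i}$) to recover exactly the polynomial $P_1$ of \eqref{poly2}, after which the sign and coercivity conditions are imported from the $\alpha=1$ part of Theorem~\ref{thm.ent}. The only difference is cosmetic: you spell out the $K(1,\beta)>0$ lower bound slightly more explicitly than the paper does.
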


\begin{proof}
We have with $v_i=\log u_i$ and $w_i=-u_i^{-1}$:
\begin{align*}
  \frac{dS_1^h}{dt} &= \sum_{i\in\T_h}(v_{i+1}-v_i)J_{i+1/2} \\
	&= h^{-1}\sum_{i\in\T_h}(v_{i+1}-v_i)\bigg((A_{i+1}-A_i)-\frac12(w_{i+1}+w_i)
	(B_{i+1}-B_i)\bigg) \\
	&= -h^{-1}\sum_{i\in\T_h}\bigg((v_{i+1}-2v_i+v_{i-1})A_i \\
	&\phantom{xx}{}
	- \frac12\big((v_{i+1}-v_{i})(w_{i+1}+w_{i}) - (v_{i}-v_{i-1})(w_{i}+w_{i-1})\big)B_i\bigg).
\end{align*}
By definition of $\xi_{1,i}$ and $\xi_{2,i}$,
\begin{align*}
  \frac{1}{h^2}(v_{i+1}-2v_i+v_{i-1}) &= -\xi_{2,i}+\xi_{1,i}^2, \\
  -\frac{1}{2h^2}\big((v_{i+1}-v_{i})(w_{i+1}+w_{i}) - (v_{i}-v_{i-1})(w_{i}+w_{i-1})\big)	&= -\xi_{2,i}.
\end{align*}
Therefore,
\begin{align*}
  \frac{dS_1^h}{dt} &= -h\sum_{i\in\T_h}\bar u_i^{\beta+1}
	\big((\lambda_1\xi_{2,i}-\lambda_2\xi_{1,i}^2)(-\xi_{2,i}+\xi_{1,i}^2)
	+ (\lambda_3\xi_{2,i}-\lambda_4\xi_{1,i})\xi_{2,i}\big) \\
  &= -h\sum_{i\in\T_h}\bar u_i^{\beta+1} P_1(\xi_{1,i},\xi_{2,i}),
\end{align*}
where $P_1$ is the same polynomial as in \eqref{poly2}. It is nonnegative
if and only if $K(1,\beta)\ge 0$ holds.
\end{proof}


\section{Discretized multi-dimensional thin-film equation}\label{sec.tfe}

The ideas of the previous section cannot be adapted in a straightforward way to the
multi-dimensional setting, since there are many possibilities to choose the
finite differences and the discrete variables. One idea is to employ only
{\em scalar} variables like $\Delta u$, $|\na u|^2$, etc., similarly as
for the method of systematic integration by parts of \cite{JuMa06}. Still,
there does not exist a general approach to define the scalar discrete variables,
but we show in this section that the multi-dimensional case can be treated
at least in principle. As an example, we consider the thin-film equation
$$
  \pa_t u = -\diver J, \quad J=u^\beta \na\Delta u\quad\mbox{in }\T^d,
$$
where $\T^d$ is the multi-dimensional torus and $\beta>0$, and the
logarithmic entropy
$$
  S_0(u) = \int_{\T^d}(-\log u)dx.
$$
We show first that we can write $J=\na A-v\na B$, where $v=-u^{-1}$ and $A$, $B$
are functions depending on $\Delta v/v$ and $|\na v|^2/v^2$. 

\begin{lemma}
It holds that $J=\na A-v\na B$, where
\begin{align*}
  A &= u^\beta\big(\lambda_1 u^2\Delta(-u^{-1}) - \lambda_2 u^3|\na(-u^{-1})|^2\big)
	= \frac{u^\beta}{v}\bigg(\lambda_1\frac{\Delta v}{v}
	+ \lambda_2\bigg|\frac{\na v}{v}\bigg|^2\bigg), \\
  B &= u^\beta\big(-\lambda_3 u^3\Delta(-u^{-1}) + \lambda_4 u^4|\na(-u^{-1})|^2\big)
	= \frac{u^\beta}{v^2}\bigg(\lambda_3\frac{\Delta v}{v}
	+ \lambda_4\bigg|\frac{\na v}{v}\bigg|^2\bigg)
\end{align*}
and the parameters $\lambda_i$ are defined by
\begin{equation}\label{4.lam}
  \lambda_1 = \beta+1, \quad \lambda_2 = -2(\beta+1), \quad
  \lambda_3 = \beta, \quad \lambda_4 = -2\beta.
\end{equation}
\end{lemma}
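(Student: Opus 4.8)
The plan is to verify the claimed identity $J=\na A - v\na B$ by direct computation, exactly mirroring the one-dimensional lemma but now with the Laplacian and gradient-squared in place of the second and first derivatives. Since $v=-u^{-1}$, I first express everything in terms of $u$ and its derivatives, compute $\na A$ and $\na B$ using the product and chain rules, form the combination $\na A - v\na B$, and match it against the target flux $J=u^\beta\na\Delta u$. The coefficients $\lambda_1,\dots,\lambda_4$ in \eqref{4.lam} should then fall out by comparing the coefficients of the distinct differential monomials that appear.

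\smallskip
\textbf{Key computations.} First I would rewrite $A$ and $B$ explicitly in terms of $u$. Using $v=-u^{-1}$, one has $\Delta(-u^{-1}) = u^{-2}\Delta u - 2u^{-3}|\na u|^2$ and $|\na(-u^{-1})|^2 = u^{-4}|\na u|^2$, so that
\begin{align*}
  A &= \lambda_1 u^\beta\Delta u - (2\lambda_1+\lambda_2)u^{\beta-1}|\na u|^2, \\
  B &= \lambda_3 u^{\beta-1}\Delta u - (2\lambda_3+\lambda_4)u^{\beta-2}|\na u|^2.
\end{align*}
Next I would compute $\na A$ and $v\na B = -u^{-1}\na B$, differentiating each term with the product rule; here $\na\Delta u$, $\na(|\na u|^2)=2(\na^2 u)\na u$, and the factors $\na(u^\beta)=\beta u^{\beta-1}\na u$ etc.\ all enter. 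Forming $\na A - v\na B$ and collecting, the highest-order term $u^\beta\na\Delta u$ carries the coefficient $\lambda_1-\lambda_3$, which must equal $1$; this already forces $\lambda_1-\lambda_3=1$, consistent with \eqref{4.lam}. The remaining structural terms, namely those proportional to $u^{\beta-1}(\Delta u)\na u$, $u^{\beta-1}(\na^2 u)\na u$, and $u^{\beta-2}|\na u|^2\na u$, must all cancel, since the target $J$ contains only the single term $u^\beta\na\Delta u$. Imposing these cancellations yields a small linear system whose unique solution is \eqref{4.lam}.

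\smallskip
\textbf{Main obstacle.} The genuinely delicate point, compared with the one-dimensional case, is bookkeeping the \emph{vector-valued} intermediate terms: in one space dimension the product rule $(vv_x)_x=\frac12(v^2)_{xx}$ collapses mixed terms neatly, but in $\T^d$ one must handle $(\na^2 u)\na u$ and $(\Delta u)\na u$ as genuinely distinct vector fields and be careful not to conflate $\na(|\na u|^2)$ with $2(\Delta u)\na u$. I expect the cancellation of the $(\na^2 u)\na u$-type contributions to be the crux: these arise both from differentiating $|\na u|^2$ inside $A$ and from the $\Delta u$ term inside $B$, and matching them is what pins down the relations among the $\lambda_i$ beyond the leading-order constraint. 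Once these vector monomials are tracked consistently, identifying coefficients against $J=u^\beta\na\Delta u$ is routine and produces \eqref{4.lam}, completing the verification.
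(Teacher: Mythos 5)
Your strategy is exactly the paper's: expand $\na A-v\na B=\na A+u^{-1}\na B$ as a linear combination of the four vector monomials $u^\beta\na\Delta u$, $u^{\beta-1}\Delta u\,\na u$, $u^{\beta-1}(\na^2u)\na u$, and $u^{\beta-2}|\na u|^2\na u$, and match coefficients against $J=u^\beta\na\Delta u$. The resulting system ($\lambda_1-\lambda_3=1$, $\beta\lambda_1-(\beta+1)\lambda_3=0$, $-(2\lambda_1+\lambda_2)+2\lambda_3+\lambda_4=0$, $(1-\beta)(2\lambda_1+\lambda_2)+\beta(2\lambda_3+\lambda_4)=0$) does have \eqref{4.lam} as its unique solution, and your warning about not conflating $(\na^2u)\na u$ with $(\Delta u)\na u$ is precisely the multi-dimensional bookkeeping the paper performs.

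There is, however, a concrete error that would derail the verification if carried through: your expression for $B$ in terms of $u$. From $\Delta(-u^{-1})=u^{-2}\Delta u-2u^{-3}|\na u|^2$ and $|\na(-u^{-1})|^2=u^{-4}|\na u|^2$ (both of which you state correctly), one gets
\begin{equation*}
  B = u^\beta\big(-\lambda_3u^{3}\Delta(-u^{-1})+\lambda_4u^{4}|\na(-u^{-1})|^2\big)
    = -\lambda_3u^{\beta+1}\Delta u+(2\lambda_3+\lambda_4)u^{\beta}|\na u|^2,
\end{equation*}
whereas you wrote $B=\lambda_3u^{\beta-1}\Delta u-(2\lambda_3+\lambda_4)u^{\beta-2}|\na u|^2$, which is $-u^{-2}=-v^2$ times the correct expression (it looks as if the prefactor $1/v^2$ in the second displayed form of $B$ was mishandled). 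With your $B$, the contribution $u^{-1}\na B$ produces $\lambda_3u^{\beta-2}\na\Delta u$ instead of $-\lambda_3u^{\beta}\na\Delta u$; the leading coefficient is then not $\lambda_1-\lambda_3$, all the $B$-terms carry powers of $u$ off by $u^{-2}$ relative to the $A$-terms, and matching $J=u^\beta\na\Delta u$ would force $\lambda_3=0$, contradicting $\lambda_3=\beta$ in \eqref{4.lam}. Your $A$ is correct, so the fix is purely mechanical: repair the sign and the powers in $B$, after which the coefficient identification closes exactly as you describe and as in the paper.
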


\begin{proof}
We compute
\begin{align*}
  \na A + u^{-1}\na B &= 
	(\lambda_1-\lambda_3)u^\beta\na\Delta u
	+ \big(\beta\lambda_1-(\beta+1)\lambda_3\big)u^{\beta-1}\Delta u\na u \\
	&\phantom{xx}{}
	+ 2\big(-(2\lambda_1+\lambda_2)+2\lambda_3+\lambda_4\big)u^{\beta-1}\na^2 u\na u \\
	&\phantom{xx}{}
	+ \big((1-\beta)(2\lambda_1+\lambda_2)+\beta(2\lambda_3+\lambda_4)\big)
	|\na u|^2\na u.
\end{align*}
Here, $\na^2 u$ denotes the Hessian matrix of $u$.
Identifying the coefficients of $\na A+u^{-1}\na B$ and $J=u^\beta\na\Delta u$
gives the linear system
\begin{align*}
  \lambda_1-\lambda_3 &= 1, &\quad \beta\lambda_1-(\beta+1)\lambda_3 &= 0, \\
  -(2\lambda_1+\lambda_2)+2\lambda_3+\lambda_4 &= 0, &\quad
	(1-\beta)(2\lambda_1+\lambda_2)+\beta(2\lambda_3+\lambda_4) &= 0.
\end{align*}
The unique solution is given by \eqref{4.lam}. 
\end{proof}

\begin{proposition}\label{prop.tfe}
Let $\beta=2$. Then $dS_0/dt\le 0$ for all $t>0$.
\end{proposition}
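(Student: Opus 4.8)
The plan is to mirror the one-dimensional argument of Theorem~\ref{thm.ent}, replacing $\pa_x$ by $\na$ and $\pa_{xx}$ by $\Delta$, and then to exploit the special structure of the parameters \eqref{4.lam} at $\beta=2$. Since the logarithmic entropy density satisfies $s_0'(u)=-u^{-1}=v$, I would first integrate by parts once on the torus to obtain
$$
  \frac{dS_0}{dt} = \int_{\T^d} s_0'(u)\,\pa_t u\,dx
  = -\int_{\T^d} v\,\diver J\,dx = \int_{\T^d}\na v\cdot J\,dx .
$$
Substituting $J=\na A-v\na B$ from the preceding lemma and integrating by parts a second time—moving $\na B$ onto the combined factor $v\na v$—gives
$$
  \frac{dS_0}{dt} = -\int_{\T^d}\big(A\,\Delta v - \diver(v\na v)\,B\big)\,dx ,
$$
where the multi-dimensional product rule $\diver(v\na v)=v\Delta v+|\na v|^2$ plays the role of the identity $(vv_x)_x=vv_{xx}+v_x^2$ used in the continuous case.

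Next I would introduce the scalar variables $\xi_2=\Delta v/v$ and $\xi_1^2=|\na v|^2/v^2$, as anticipated in the introduction, and insert the explicit forms of $A$ and $B$. Since $A\,\Delta v=u^\beta(\lambda_1\xi_2+\lambda_2\xi_1^2)\xi_2$ and $\diver(v\na v)\,B=u^\beta(\xi_2+\xi_1^2)(\lambda_3\xi_2+\lambda_4\xi_1^2)$, a direct expansion yields
$$
  \frac{dS_0}{dt} = -\int_{\T^d} u^\beta\,P(\xi_1,\xi_2)\,dx ,
$$
where $P(\xi_1,\xi_2)=(\lambda_1-\lambda_3)\xi_2^2+(\lambda_2-\lambda_3-\lambda_4)\xi_2\xi_1^2-\lambda_4\xi_1^4$ is exactly the quartic \eqref{poly} of the one-dimensional theory. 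This step is purely algebraic; the only structural difference is that here $\xi_1^2=|\na v|^2/v^2$ is genuinely nonnegative, so that $P$ need only be checked as a polynomial in the pair $(\xi_1^2,\xi_2)$.

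Finally I would insert \eqref{4.lam} at $\beta=2$, namely $\lambda_1=3$, $\lambda_2=-6$, $\lambda_3=2$, $\lambda_4=-4$. Then $\lambda_1-\lambda_3=1$, $\lambda_2-\lambda_3-\lambda_4=-4$, and $-\lambda_4=4$, so that
$$
  P(\xi_1,\xi_2)=\xi_2^2-4\xi_1^2\xi_2+4\xi_1^4=(\xi_2-2\xi_1^2)^2\ge 0 ,
$$
which forces $dS_0/dt\le 0$ and finishes the proof. The crux—and the reason the statement is confined to $\beta=2$—is that this is precisely the exponent at which $P$ degenerates to a perfect square: for general $\beta$ one computes $P=\xi_2^2-(\beta+2)\xi_1^2\xi_2+2\beta\xi_1^4$, whose discriminant in $\xi_2$ equals $(\beta-2)^2$, so the scalar-variable reduction produces a nonnegative integrand only when $\beta=2$. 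The main obstacle is therefore not the integration by parts, which is routine, but the limitation inherent in using scalar variables alone: in contrast to the one-dimensional setting the Hessian of $v$ enters the estimate only through $\Delta v$, and this loss of information pins the admissible exponent down to the single value $\beta=2$ and explains why one obtains here only dissipation, not a strict entropy-production bound of the form \eqref{1.cei}.
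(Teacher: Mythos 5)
Your argument is correct and coincides with the paper's own proof: the same two integrations by parts, the same product rule $\diver(v\na v)=v\Delta v+|\na v|^2$, and the same reduction to the quartic $P_0$ in the scalar variables $\xi_1^2=|\na v|^2/v^2$, $\xi_2=\Delta v/v$, with nonnegativity forced by $-4\lambda_4-(\lambda_2-\lambda_3-\lambda_4)^2=-(\beta-2)^2$. Your explicit factorization $P=(\xi_2-2\xi_1^2)^2$ at $\beta=2$ is a nice touch but not a different method.
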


\begin{proof}
The time derivative of $S_0$ becomes
\begin{align*}
  \frac{dS_0}{dt} &= \int_{\T^d}J\cdot\na v dx
	= \int_{\T^d}(\na A-v\na B)\cdot\na v dx \\
	&= -\int_{\T^d}\big(\Delta v A - (v\Delta v+|\na v|^2)B\big)dx \\
	&= -\int_{\T^d}u^\beta\bigg((\lambda_1-\lambda_3)\bigg(\frac{\Delta v}{v}\bigg)^2
	+ (\lambda_2-\lambda_3-\lambda_4)\frac{\Delta v}{v}\bigg|\frac{\na v}{v}\bigg|^2
	- \lambda_4\bigg|\frac{\na v}{v}\bigg|^4\bigg)dx.
\end{align*}
The polynomial 
\begin{equation}\label{poly0}
  P_0(\xi_1,\xi_2) = (\lambda_1-\lambda_3)\xi_2^2
  + (\lambda_2-\lambda_3-\lambda_4)\xi_2\xi_1^2 - \lambda_4\xi_1^4
\end{equation}
is nonnegative in $\R^2$ if and only if $-4\lambda_4-(\lambda_2-\lambda_3-\lambda_4)^2
=-(\beta-2)^2\ge 0$. Hence, we need to assume that $\beta=2$.
\end{proof}

\begin{remark}[Discussion]\rm
The restriction $\beta=2$ in the previous lemma comes from the
fact that we do not have a free parameter to optimize the inequalities.
One may overcome this issue by allowing $A$ and $B$ to depend on more
variables or by assuming that $A$ and $B$ are matrix-valued with
variables like $\na^2 u$ and $\na u\otimes\na u$ and to formulate
$J=\diver A-v\diver B$. However, this leads to several parameters that need
to be determined and eventually to complicated numerical schemes 
which seem to be less interesting in practice.

Another way to understand the restriction $\beta=2$ is from systematic
integration by parts. Indeed, computing
\begin{align*}
  \frac{dS_0}{dt} 
  &= \int_{\T^d}u^\beta \na\Delta u\cdot\na(-u^{-1})dx
  - \int_{\T^d}\diver(u^{\beta-2}\Delta u\na u)dx \\
  &= -\int_{\T^d}u^{\beta-2}(\Delta u)^2dx 
  - (\beta-2)\int_{\T^d}u^{\beta-3}\Delta u|\na u|^2 dx,
\end{align*}
we see that $S_0$ is a Lyapunov functional if the last term vanishes,
which is the case if $\beta=2$.
This computation suggests the following generalization: 
Let $\beta\in(0,2)$ and consider the R\'enyi entropy $S_\alpha$. Then
$$
  \frac{dS_\alpha}{dt} = -\int_{\T^d}u^{\alpha+\beta-2}(\Delta u)^2 dx
  - (\alpha+\beta-2)\int_{\T^d}u^{\alpha+\beta-3}\Delta u|\na u|^2dx,
$$
and the last term vanishes if $\alpha=2-\beta>0$. 
As for the case $\beta=2$ and $\alpha=0$, which is discussed below, we expect that the
generalization $\beta\in(0,2)$ and $\alpha=2-\beta$ can be ``translated'' to the
discrete case, but we leave the details to the interested reader.
\qed
\end{remark}

We turn to the discrete setting. Let $\T_h^d$ be the discrete multi-dimensional torus,
$u:\T_h^d\to\R$ be a function, and $e_\mu$ be the $\mu$-th unit vector of $\R^d$. 
We write $u_i=u(i)$ for $i\in\T_h^d$ and we introduce the finite differences
$$
  \pa_\mu^+ u_i = \frac{1}{h}(u(i+he_\mu)-u(i)), \quad
	\pa_\mu^- u_i = \frac{1}{h}(u(i)-u(i-he_\mu)), 
$$
where $i\in\T_h^d$ and $\mu=1,\ldots,d$. The discrete divergence of 
$F=(F_1,\ldots,F_d):\T_h^d\to\R^d$ and the discrete gradient and Laplacian 
of $u:\T_h^d\to\R$ are defined by, respectively,
$$
  \diver_h^+ F = \sum_{\mu=1}^d\pa_\mu^+ F_\mu, \quad (\na_h^\pm u)_\mu 
  = \pa_\mu^\pm u, \quad \Delta_h u = \diver_h^+\na_h^- u.
$$
where $\mu=1,\ldots,d$. The discrete analogs of $v_{xx}/v$ and $(v_x/v)^2$ are
$$
  \xi_{2,i} = \frac{\Delta_h v_i}{v_i}, \quad 
	\xi_{1,i}^2 = \bigg|\frac{\na_h^+ v_i}{v_i}\bigg|^2,
$$
where $v_i=-u_i^{-1}$. The numerical scheme reads as
\begin{align}
  & \pa_t u_i = -\diver_h^+ J_i, \quad J_i = \na_h^- A_i - v_i\na_h^- B_i, \nonumber \\
  & A_i = \frac{\bar u_i^{\beta}}{v_i}
	(\lambda_1\xi_{2,i}+\lambda_2\xi_{1,i}^2), \quad
	B_i = \frac{\bar u_i^{\beta}}{v_i^2}
	(\lambda_3\xi_{2,i}+\lambda_4\xi_{1,i}^2), \label{4.sch}
\end{align}
and the coefficients are as in \eqref{4.lam}.

\begin{lemma}
Let $\beta=2$ and $u_i^0>0$ for $i\in\T_h^d$. 
Then there exists a positive solution $(u_i)_{i\in\T_h^d}$ to
\eqref{4.lam} and \eqref{4.sch}, and it holds that $dS_0^h/dt\le 0$ for all $t>0$.
\end{lemma}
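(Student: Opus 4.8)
The statement has two parts: existence of a positive solution, and the discrete entropy dissipation $dS_0^h/dt \le 0$. The plan is to treat the dissipation first (since it is the one-dimensional argument transplanted to $\T_h^d$) and then obtain global existence as a consequence, mirroring the structure of Proposition~\ref{prop.ex}.

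For the dissipation inequality, I would compute $dS_0^h/dt$ directly from the scheme \eqref{4.sch}. Since $S_0^h = h^d\sum_{i}(-\log u_i)$, the time derivative is $h^d\sum_i (-u_i^{-1})\pa_t u_i = h^d\sum_i v_i\,(-\diver_h^+ J_i)$, where $v_i = -u_i^{-1}$. The first key step is discrete summation by parts: on the torus $\T_h^d$ the operators $\diver_h^+$ and $\na_h^-$ are adjoint up to a sign, so $-h^d\sum_i v_i\diver_h^+ J_i = h^d\sum_i \na_h^+ v_i\cdot J_i$. Substituting $J_i = \na_h^- A_i - v_i\na_h^- B_i$ and performing a second summation by parts should convert the expression into $-h^d\sum_i(\Delta_h v_i\,A_i - (\text{product term})\,B_i)$. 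The crucial point, exactly as in Theorem~\ref{thm.ent2}, is to identify the discrete analog of the product rule $v\Delta v + |\na v|^2 = \tfrac12\Delta(v^2)$; the definitions of $\xi_{1,i}$ and $\xi_{2,i}$ in terms of $\Delta_h v_i$ and $|\na_h^+ v_i|^2$ must be chosen so that the coefficient of $B_i$ becomes $v_i^2(\xi_{2,i}+\xi_{1,i}^2)$. Once this matches, inserting the expressions \eqref{4.sch} for $A_i$ and $B_i$ collapses the summand into $\bar u_i^{\beta}\,P_0(\xi_{1,i},\xi_{2,i})$ with $P_0$ the polynomial from \eqref{poly0}, and Proposition~\ref{prop.tfe} gives $P_0\ge 0$ precisely when $\beta=2$.

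For global existence, I would argue as in Proposition~\ref{prop.ex}: scheme \eqref{4.sch} is an autonomous system of ODEs, so Picard--Lindel\"of yields a unique local positive solution on a maximal interval $[0,T)$. The dissipation inequality gives $h^d\sum_i(-\log u_i(t)) \le h^d\sum_i(-\log u_i^0)$, and mass conservation (which follows because $\diver_h^+$ telescopes to zero over the periodic torus) gives $h^d\sum_i u_i(t) = h^d\sum_i u_i^0$. Adding these, the quantity $h^d\sum_i(u_i(t)-\log u_i(t))$ is bounded uniformly in $t$; since $s\mapsto s-\log s$ blows up as $s\to 0^+$ and $s\to\infty$, each $u_i(t)$ stays in a compact subinterval of $(0,\infty)$, so the solution extends to $[0,\infty)$.

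The main obstacle is the second summation by parts in the multi-dimensional setting. In one dimension the telescoping that produces the discrete product rule is transparent, but on $\T_h^d$ the gradient $\na_h^-$ is vector-valued and the interaction of the forward/backward differences $\pa_\mu^+,\pa_\mu^-$ across coordinate directions must be tracked carefully to ensure that the cross terms reassemble into the scalar combination $v_i^2(\xi_{2,i}+\xi_{1,i}^2)$. Verifying that the chosen scalar discretizations $\xi_{2,i}=\Delta_h v_i/v_i$ and $\xi_{1,i}^2=|\na_h^+ v_i/v_i|^2$ are in fact compatible with this telescoping---rather than leaving an uncontrolled remainder---is the delicate point; everything downstream reduces to the already-established algebra of $P_0$.
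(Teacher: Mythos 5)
Your plan follows the paper's proof essentially step for step: two discrete summations by parts, the per-coordinate telescoping identity $\diver_h^+(v_i\na_h^- v_i)=v_i\Delta_h v_i+|\na_h^+ v_i|^2$ (which is exactly the "delicate point" you flag, and which the paper verifies directly), reduction to the polynomial $P_0$ with nonnegativity iff $\beta=2$, and then global existence from the entropy bound plus mass conservation as in Proposition \ref{prop.ex}. The only slip is notational: the adjoint of $\diver_h^+$ pairs $J_i$ with $\na_h^- v_i$, not $\na_h^+ v_i$, but this does not affect the argument.
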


\begin{proof}
By the Picard--Lindel\"of theorem, there exists a local smooth positive solution
$(u_i)_{i\in\T_h^d}$. We use the summation-by-parts formula
$$
  \sum_{i\in\T_h^d}v_i\diver_h^+ F_i = -\sum_{i\in\T_h^d}(\na_h^- v_i)\cdot F_i
$$
to compute the time derivative of the entropy:
\begin{align*}
  \frac{dS_0^h}{dt}
    &= h\sum_{i\in\T_h^d}\pa_t u_i v_i
	= -h\sum_{i\in\T_h^d}\diver_h^+ J_i v_i
	= h\sum_{i\in\T_h^d}J_i\cdot\na_h^- v_i \\
	&= h\sum_{i\in\T_h^d}(\na_h^- A_i - v_i\na_h^- B_i)\cdot\na_h^- v_i
	= -h\sum_{i\in\T_h^d}\big(\Delta_h v_iA_i - \diver_h^+(v_i\na_h^- v_i)B_i\big).
\end{align*}
The product rule reads as
\begin{align*}
  \diver_h^+&(v_i\na_h^- v_i) = \sum_{\mu=1}^d\pa_\mu^+(v_i\pa_\mu^- v_i) 
	= \frac{1}{h}\sum_{\mu=1}^d\pa_\mu^+\big(v(i)(v(i)-v(i-he_\mu))\big) \\
	&= \frac{1}{h^2}\sum_{\mu=1}^d\big[v(i+he_\mu)\big(v(i+he_\mu)-v(i)\big)
	- v(i)\big(v(i)-v(i-he_\mu)\big)\big] \\
	&= \frac{1}{h^2}\sum_{\mu=1}^d\big[v(i)\big(v(i+he_\mu)-2v(i)+v(i-he_\mu)\big)
	+ \big(v(i+he_\mu)-v(i)\big)^2\big] \\
  &= v_i\Delta_h v_i + |\na_h^+ v_i|^2.
\end{align*}
Therefore,
\begin{align*}
  \frac{dS_0^h}{dt} &= -h\sum_{i\in\T_h^d}\big(\Delta_h v_iA_i - (v_i\Delta_h v_i 
	+ |\na_h^+ v_i|^2)B_i\big) \\
  &= -h\sum_{i\in\T_h^d}\big(\xi_{2,i}v_iA_i - (\xi_{2,i}+\xi_{1,i}^2)v_i^2B_i\big) \\
	&= -h\sum_{i\in\T_h^d}\big((\lambda_1-\lambda_3)\xi_{2,i}^2
	+ (\lambda_2-\lambda_3-\lambda_4)\xi_{2,i}\xi_{1,i}^2 - \lambda_4\xi_{1,i}^4\big)\\
	&= -h\sum_{i\in\T_h^d}\big(\xi_{2,i}^2 - (\beta+2)\xi_{2,i}\xi_{1,i}^2
	+ 2\beta\xi_{1,i}^4\big).
\end{align*}
This gives the polynomial \eqref{poly0} which is nonnegative in $\R^2$ 
if and only if $\beta=2$. Then $S_0^h(t)=S_0^h(0)$ for all $t>0$, and we
conclude as in the proof of Proposition \ref{prop.ex} that $0<c_0\le u_i(t)\le c_1$
for all $i\in\T_h^d$ and $t>0$, where $c_1\ge c_0>0$ are some constants.
Thus, the solution $u_i(t)$ can be extended to a global one.
\end{proof}


\section{Numerical tests}\label{sec.sim}

We apply our scheme to the thin-film and DLSS equations on the torus
in one and two space dimensions. 
The system of ordinary differential equations is solved by the
command \begin{verbatim} scipy.integrate.solve_ivp \end{verbatim} 
from the SciPy library, which uses the
Backward Differentiation Formula (BDF) method of variable order or the
implicit Runge--Kutta method of the Radau IIA family of order 5.
We used the default values {\verb atol = 1e-3} for the absolute tolerance
and {\verb rtol = 1e-6} for the relative tolerance. The local erroris computed
according to {\verb atol + rtol * abs(u)}.

\subsection{DLSS equation}

The DLSS equation is solved by scheme \eqref{3.sch} using the logarithmic entropy:
\begin{align*}
  & \pa_t u_i = -\frac{1}{h}(J_{i+1/2}-J_{i-1/2}),\quad 
	J_{i+1/2} = \frac{1}{h}(A_{i+1}-A_i) + \frac{1}{2h}(u^{-1}_{i+1}+u^{-1}_i)
	(B_{i+1}-B_i), \\
  & A_i= \bar u_i\bigg(\frac53\xi_{2,i}-\frac73\xi_{1,i}^2\bigg), \quad
  B_i= \bar u_i u_i\bigg(-\frac23\xi_{2,i} + \xi_{1,i}^2\bigg), \\
	& \xi_{1,i}^2 = \frac{\bar{u}_i^2}{2h^2}\big((u^{-1}_{i+1}-u^{-1}_{i})^2
	+(u^{-1}_i-u^{-1}_{i-1})^2\big), \quad 
	\xi_{2,i} = \frac{\bar{u}_i^2}{u_ih^2}(u^{-1}_{i+1}-2u^{-1}_i+u^{-1}_{i-1}),
\end{align*}
where $i\in\T_h$.
Figure \ref{fig.dlss1} shows the solution to the DLSS equation at various
time steps using the initial datum $u^0(x)=\max\{10^{-10},\cos(\pi x)^{16}\}$
and the space grid size $h=1/100$. We see that the solution is not monotone,
since it possesses at $x=0.5$ and $t=10^{-8}$ a local maximum. After some time,
it approaches the constant steady state given by $\int_0^1 u^0(x)dx$.

\begin{figure}[htb]
\includegraphics[width=0.55\linewidth]{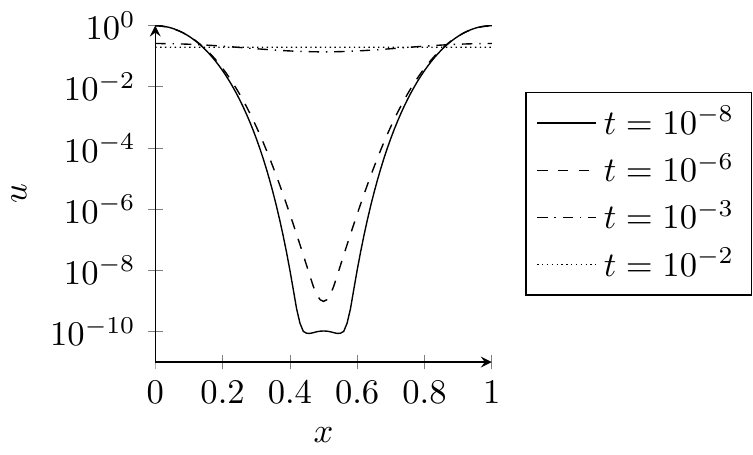}
\caption{Evolution of the DLSS equation in a semi-logarithmic scale, using the
initial datum $u^0(x)=\max\{10^{-10},\cos(\pi x)^{16}\}$.}
\label{fig.dlss1}
\end{figure}

The entropy decay for $\alpha=0$ is illustrated in Figure \ref{fig.dlss2} (left). 
We used the initial datum $u^0(x)=2-10^{-6}$ for $x\in(0,0.5)$ and
$u^0(x)=10^{-6}$ for $x\in(0.5,1)$. We observe in the semi-logarithmic plot that
the decay is exponential, as expected. The rate degrades for
larger times when the $\ell^2$ error dominates, i.e., when the grid is 
rather coarse.

The $\ell^2$ error (in space and time) at time $t=0.001$ is shown in 
Figure \ref{fig.dlss2} (right), using the initial datum $u^0(x)=1+0.5\sin(2\pi x)$
for $x\in(0,1)$. As an explicit solution is not known, we use a numerical
solution with $h=1/2048$ as the reference solution.
As expected, the convergence rate is roughly of second order.

\begin{figure}[htb]
\includegraphics[width=0.45\linewidth]{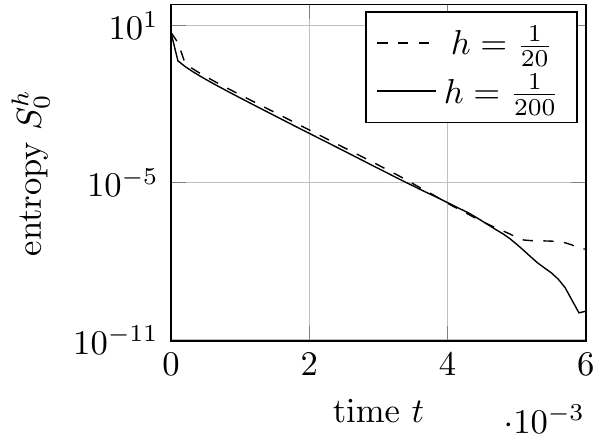}
\includegraphics[width=0.45\linewidth]{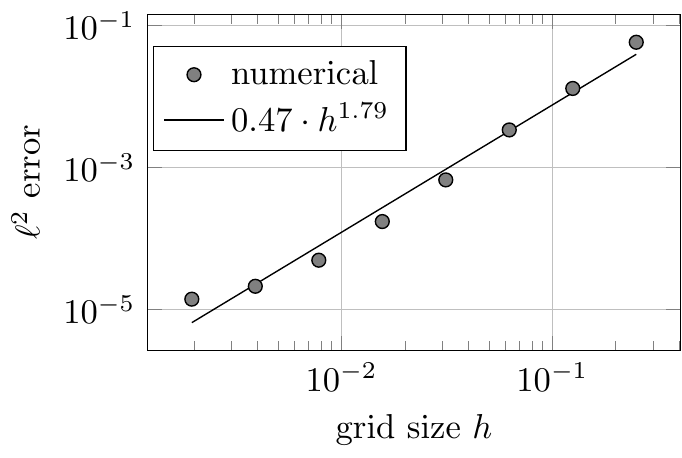}
\caption{Left: Decay of the logarithmic entropy $s_0(u(t))$ for two different
space grid sizes $h=1/20$ and $h=1/200$. 
Right: Convergence of the $\ell^2$ error. The dots are the values from the numerical
solution, the solid line is the regression curve.}
\label{fig.dlss2}
\end{figure}



\subsection{Thin-film equation}

The thin-film equation is solved by scheme \eqref{3.sch} using the logarithmic entropy:
\begin{align*}
  & \pa_t u_i = -\frac{1}{h}(J_{i+1/2}-J_{i-1/2}),\quad 
  J_{i+1/2} = \frac{1}{h}(A_{i+1}-A_i) + \frac{1}{2h}
  (u^{-1}_{i+1}+u^{-1}_i)(B_{i+1}-B_i), \\
  & A_i = u_i^{\beta+1}\bigg(\frac{7\beta+9}{9}\xi_{2,i} 
  + \frac{\beta^2 - 14\beta - 18}{9}\xi_{1,i}^2\bigg), \\
  & B_i = u_i^{\beta+2}\bigg(-\frac{7 \beta}9\xi_{2,i} 
  + \frac{15\beta-\beta^2}{9}\xi_{1,i}^2\bigg), \\
  & \xi_{1,i}^2 = \frac{u_i^2}{2h^2}\big((u^{-1}_{i+1}-u^{-1}_{i})^2
  +(u^{-1}_i-u^{-1}_{i-1})^2\big), 
  \quad \xi_{2,i} = \frac{u_i}{h^2}(u^{-1}_{i+1}-2u^{-1}_i+u^{-1}_{i-1}),
\end{align*}
where $i\in\T_h$.
The solutions at different times, emanating from the initial datum 
$u^0(x)=1+0.5\sin(2\pi x)$,
are shown in Figure \ref{fig.tfe}, where we have chosen $\beta=2$. Again, the
solutions converge to the constant steady state. 
The decay of the logarithmic entropy is illustrated in Figure \ref{fig.entropytfe},
using $\beta=2$ and the initial datum $u^0(x)=1+(1-10^{-16})\sin(2\pi x)$ for 
$x\in(0,1)$. The decay rate is exponential over a large time interval.

\begin{figure}[htb]
	\includegraphics[width=0.35\linewidth]{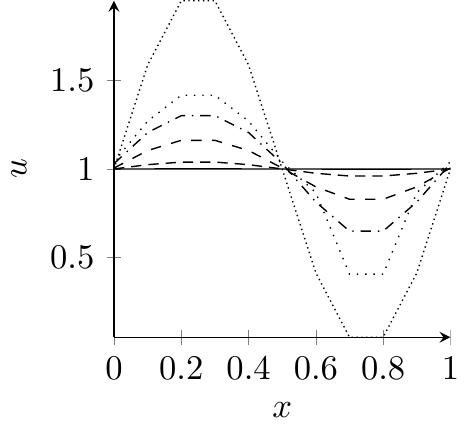}
	\includegraphics[width=0.35\linewidth]{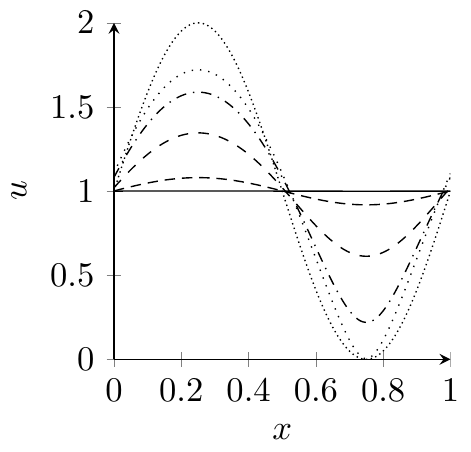}
	\caption{Evolution of the solution to the thin-film equation at times
		$t=0$ (densely dotted), $t=2\cdot 10^{-4}$ (dotted), 
		$t=5\cdot 10^{-4}$ (dash-dotted), 
		$t=1\cdot 10^{-3}$ (dashed), $t=2\cdot 10^{-3}$ (densely dashed), 
		and $t=5\cdot 10^{-3}$ (solid) and grid sizes $h=1/10$ (left), $h=1/200$ (right).}
	\label{fig.tfe}
\end{figure}

\begin{figure}[htb]
	\includegraphics[width=0.5\linewidth]{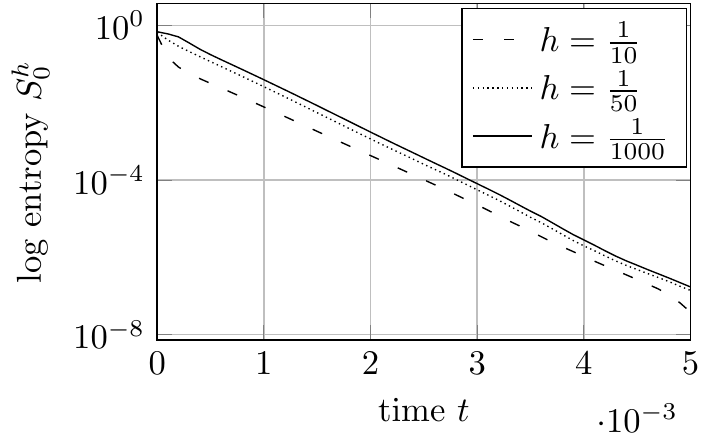}
	\caption{Decay of the logarithmic entropy $S_0(u(t))$ for various
		space grid sizes.}
	\label{fig.entropytfe}
\end{figure}

Finally, we present a numerical example in two space dimensions. As the initial
datum, we choose a lantern picture with  $77\times 100$ pixels in gray scale; 
see Figure \ref{fig.lantern} (top left). 
The evolution of the discrete solution is shown in the remaining
panels of Figure \ref{fig.lantern} for various times. 
The values $u=0$ and $u=1$ correspond to black and white,
respectively. Because of the periodic boundary conditions, we observe a small gray
band at the lower right boundary.
Interestingly, the solution shows a denoising effect, especially
for $t=3\cdot 10^{-8}$. For larger times, the diffusion drives the solution
to the constant steady state. 
These results are not surprising, as fourth-order parabolic equations have been 
suggested in the literature for image denoising. For instance, Bertozzi and Greer 
\cite{BeGr04} analyzed
$$
  \pa_t u = -\diver\big(g((\Delta u)^2)\na\Delta u\big),
$$
where $g$ is a diffusivity function, while Wei \cite{Wei99} considered
$$
  \pa_t u = -\diver\big(g(|\na u|^2)\na\Delta u\big).
$$
This model was generalized to fractional derivatives; see, e.g., \cite{GuLo11}.
An example is the equation
$$
  \pa_t u = -\diver\big(g(-(\Delta)^{1-\eps}u)\na\Delta u\big), \quad \eps>0,
$$
which formally reduces to a general thin-film equation in the limiting case $\eps=1$. 
We do {\em not} claim that the thin-film equation is a good image
denoising model; our numerical example is just a nice illustration.

\begin{figure}[htb]
	\includegraphics[width=0.3\linewidth]{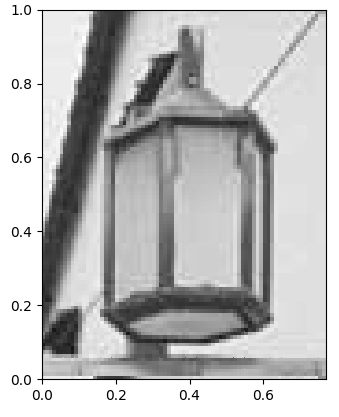}
	\includegraphics[width=0.3\linewidth]{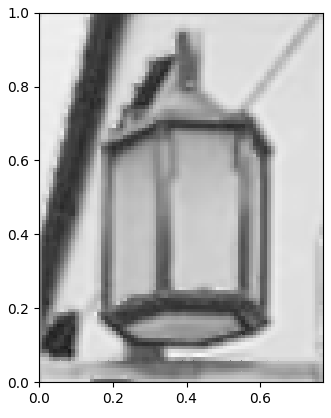} \\
	\includegraphics[width=0.3\linewidth]{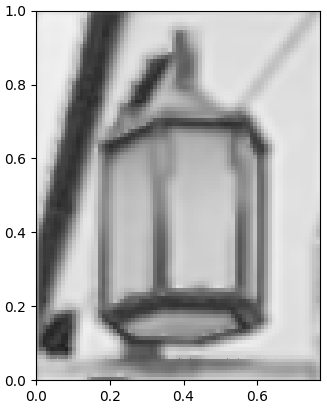}
	\includegraphics[width=0.3\linewidth]{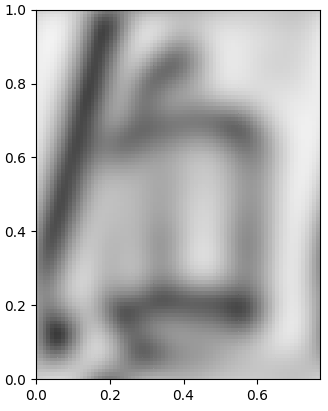}
	\caption{Evolution of the solution to the two-dimensional thin-film
		equation with $\beta=2$, $t=0$ (top left), $t=3\cdot 10^{-9}$ (top right),
		$t=10^{-8}$ (bottom left), $t=10^{-6}$ (bottom right).}
	\label{fig.lantern}
\end{figure}

Finally, we show the entropy decay of the two-dimensional example in
Figure \ref{fig.ent2}. The decay rate is exponential until approximately
$t=10^{-2}$. For later times, the numerical error dominates. Observe, however,
that we obtain denoising for very small times, like $t=10^{-9}\ldots 10^{-8}$,
where the decay rate is still exponential.

\begin{figure}[htb]
	\includegraphics[width=0.5\linewidth]{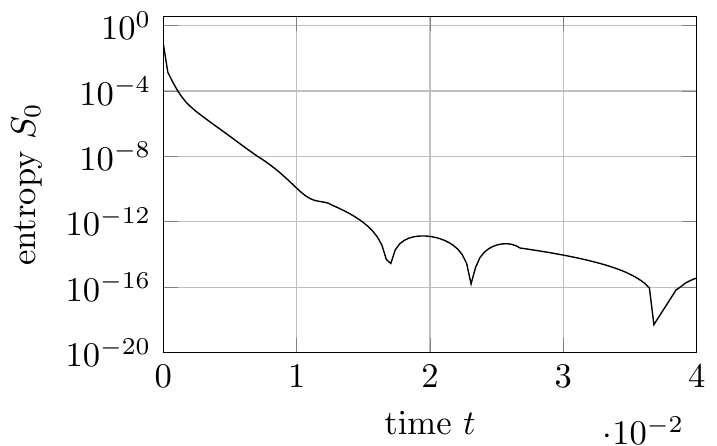}
	\caption{Decay of the logarithmic entropy $S_0(u(t))$ for various
		space grid sizes.}
	\label{fig.ent2}
\end{figure}


\end{document}